\documentclass[preprint,11pt]{elsarticle}

\usepackage[T1]{fontenc}
\usepackage{lmodern}
\usepackage{microtype}
\usepackage{amsmath,amssymb,amsthm,mathtools}
\usepackage{enumitem}
\usepackage{xcolor}
\usepackage{tikz}
\usetikzlibrary{arrows.meta,decorations.pathreplacing}

\usepackage{subcaption}
\usetikzlibrary{matrix,positioning}
\usepackage[hmargin=2.5cm,vmargin=3cm]{geometry} 

\usepackage[colorlinks=true,linkcolor=blue!45!black,citecolor=blue!45!black,urlcolor=blue!45!black]{hyperref}

\allowdisplaybreaks
\setlist{leftmargin=*,itemsep=1pt,topsep=3pt}

\theoremstyle{plain}
\newtheorem{theorem}{Theorem}[section]
\newtheorem{lemma}[theorem]{Lemma}
\newtheorem{problem}[theorem]{Problem}
\newtheorem{proposition}[theorem]{Proposition}
\newtheorem{corollary}[theorem]{Corollary}
\theoremstyle{remark}

\newcommand{\Nzero}{\mathbb N_0}

\newcommand{\abs}[1]{\lvert#1\rvert}

\begin{document}

\begin{frontmatter}

\title{Solutions to Two Problems of S\'ark\"ozy and S\'os on Additive Representation Functions}

\author[sjtu]{Peiru Kuang}
\ead{peiru_k@sjtu.edu.cn}
\author[sjtu]{Yan Wang\corref{corresponding}\fnref{funding}}
\ead{yan.w@sjtu.edu.cn}
\cortext[corresponding]{Corresponding author.}
\fntext[funding]{Supported by the National Key R\&D Program of China under Grant No.~2022YFA1006400 and the National Natural Science Foundation of China under Grant No.~12571376.}
\address[sjtu]{School of Mathematical Sciences, Shanghai Jiao Tong University, Shanghai 200240, China}

\begin{abstract}
For a set $A\subseteq\mathbb{N}_0$, let $r_1(A,n)$ denote the number of solutions of the equation $a+a^{\prime}=n$ with $a,a^{\prime}\in A$, and let $r_2(A,n)$ denote the number of such solutions subject to $a\le a^{\prime}$. These functions are called additive representation functions (as first considered by Erd\H{o}s, S\'ark\"ozy and S\'os). In this paper,
we resolve two problems posed by S\'ark\"ozy and S\'os in 1997. 
\begin{itemize}
    \item First, if $A$ is infinite and $r_2(A,2m+1)\ge r_2(A,2m)$ for every sufficiently large $m$, then the complement of $A$ is finite. This gives a negative answer to Problem 3.1 in~\cite{SarkozySos1997}.
    \item Secondly, there exist an arithmetic function $f$ satisfying $f(n) \to \infty$, $f(n+1) \ge f(n)$ for $n > n_0$, and $f(n) = o\left(\frac{n}{(\log n)^2}\right)$, and a set $A$ such that
\(
|r_1(A,n) - f(n)| = o((f(n))^{1/2})
\)
holds on a sequence of integers $n$ whose density is $1$. This gives a positive answer to Problem 3.3 in~\cite{SarkozySos1997}.
\end{itemize}
\end{abstract}

\end{frontmatter}

\section{Introduction}
Let $\mathbb{N}_{0}$ and $\mathbb{N}$ denote the sets of nonnegative and positive integers, respectively. For a subset $A \subseteq \mathbb{N}_{0}$ and $n \in \mathbb{N}_{0}$, the numbers of solutions of the equations
$$a+a^{\prime}=n, \quad a,a^{\prime}\in A$$
and
$$a+a^{\prime}=n, \quad a,a^{\prime}\in A, \quad a\le a^{\prime}$$
are denoted by \(r_1(A,n)\) and \(r_2(A,n)\), respectively, and are
called the additive representation functions of \(A\). These functions are not independent; we always have $2r_2(A,n)-1\leq r_1(A,n)\leq 2r_2(A,n)$.

The study of additive representation functions is closely related to Sidon's work~\cite{Sidon1932} in harmonic analysis and has subsequently developed through a combination of analytic, combinatorial and probabilistic methods. A set \(A\) is called a \emph{Sidon set},
or a \(B_2[1]\)-set, if the sums \(a+a'\), with \(a,a'\in A\) and
\(a\leq a'\), are all distinct. Equivalently, \(r_2(A,n)\leq1\) for
every \(n\in\Nzero\). More generally, \(A\) is called a
\(B_2[g]\)-set if \(r_2(A,n)\leq g\) for every \(n\in\Nzero\).

A natural direction in the study of additive representation functions is to investigate their regularity. One of the most basic properties is boundedness, which gives rise to the extremal problem of determining how large \(A\) can be when \(r_2(A,n)\) is uniformly bounded. 
The systematic study of this problem was initiated by Erd\H{o}s and Tur\'an~\cite{ErdosTuran}. Classical constructions were obtained by Singer~\cite{Singer1938} and by Bose and Chowla~\cite{BoseChowla1962}, while dense infinite Sidon sequences were later constructed by Ajtai, Koml\'os and Szemer\'edi~\cite{AjtaiKomlosSzemeredi1981}, and by Ruzsa~\cite{Ruzsa1998}.
The theory for \(B_h[g]\)-sets was studied by Green~\cite{Green2001} and
further developed by Cilleruelo, Ruzsa and Vinuesa~\cite{CillerueloRuzsaVinuesa2010}; see also O'Bryant's survey~\cite{OBryant2004}.

Another natural regularity question is whether an additive
representation function can be eventually increasing. Erd\H{o}s, S\'ark\"ozy and S\'os studied the monotonicity of the functions \(r_i(A,n)\)~\cite{ErdosSarkozySosIV,ErdosSarkozySosV}; see also
Balasubramanian~\cite{Balasubramanian1987}.
Erd\H{o}s and Tur\'an \cite{ErdosTuran} proved in 1941 that if $A \subseteq \mathbb{N}$ is infinite, then $r_{1}(A,n)$ cannot be constant from a certain point on. Dirac \cite{Dirac1951} proved an analogous result for $r_{2}(A,n)$. Erd\H{o}s, S\'ark\"ozy and S\'os~\cite{ErdosSarkozySosIV} proved in 1985 that $r_1(A,n)$ can be monotone for $n > n_0$ only in the trivial case when $A$ contains all the positive integers from a certain point on. The corresponding problem for \(r_2(A,n)\) proved more difficult. In 1997, S\'ark\"ozy and S\'os~\cite{SarkozySos1997} asked the following.
\begin{problem}[Sárközy--Sós~\cite{SarkozySos1997}]\label{p1}
Does there exist an infinite set $A$ such that $\mathbb{N}\setminus A$ is infinite and $r_{2}(A,n)$ is increasing from a certain point on?
\end{problem}
Throughout this paper, the term \emph{increasing} means \emph{nondecreasing}; the same terminology is used in \cite{ErdosSarkozySosIV,SarkozySos1997}.
While related partial results were obtained in \cite{ChenSarkozySosTang2005, Stumpf2020}, Problem \ref{p1} has remained open for nearly three decades. In an interesting way, the two representation functions \(r_1(A,n)\), \(r_2(A,n)\) behave completely differently. In this paper, we show that the answer to Problem \ref{p1} is negative in a strong sense. %To show \(\mathbb N\setminus A\) is finite, it suffices to show \(\mathbb N_0\setminus A\) is finite.

\begin{theorem}\label{thm:monotonicity}
Let \(A\subseteq\mathbb N_0\) be infinite. If there exists
\(m_0\in\mathbb N_0\) such that
\(
r_2(A,2m+1)\ge r_2(A,2m)
\)
for every \(m\ge m_0\), then \(\mathbb N_0\setminus A\) is finite.
\end{theorem}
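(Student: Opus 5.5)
The plan is to encode the hypothesis in generating functions and then study the behaviour as $z\to1^-$. Put
\[
f(z)=\sum_{a\in A}z^a,\qquad g(z)=\sum_{n\ge0}r_2(A,n)z^n=\tfrac12\bigl(f(z)^2+f(z^2)\bigr)\qquad(|z|<1).
\]
Separating odd and even parts of $g$, I form
\[
H(z)=\frac{g(z)-g(-z)}{2z}-\frac{g(z)+g(-z)}{2}=\sum_{m\ge0}\bigl(r_2(A,2m+1)-r_2(A,2m)\bigr)z^{2m}.
\]
By hypothesis all coefficients of $H$ with $m\ge m_0$ are nonnegative. Hence $H(x)\ge -P(x)$ for $x\in(0,1)$, where $P$ is a fixed polynomial, and in particular $H(x)\ge -C$ for some constant $C$. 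Substituting $g(\pm x)=\tfrac12(f(\pm x)^2+f(x^2))$ gives the exact identity
\[
H(x)=\frac{1-x}{4x}f(x)^2-\frac{1+x}{4x}f(-x)^2-\frac12 f(x^2).
\]

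Next I pass to the complement. Let $B=\Nzero\setminus A$ and $b(x)=\sum_{n\in B}x^n$, so that $f(x)=\frac1{1-x}-b(x)$ and $f(x^2)=\frac1{1-x^2}-b(x^2)$. Expanding, the main terms of size $\frac{1}{1-x}$ cancel exactly up to a bounded error, since $\frac{1}{4x(1-x)}-\frac{1}{2(1-x^2)}$ stays bounded near $1$. The inequality $H(x)\ge -C$ then becomes, after multiplying by $4x$,
\[
\bigl(b(x)-b(x^2)\bigr)+\bigl(b(x)-x\,b(x^2)\bigr)+(1+x)f(-x)^2\le (1-x)\,b(x)^2+O(1)\qquad (x\to1^-).
\]
Here I use $2b(x)-2xb(x^2)=\bigl(b(x)-b(x^2)\bigr)+\bigl(b(x)-xb(x^2)\bigr)$ up to harmless rearrangement. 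The point is that every term on the left is nonnegative, while the right side carries the saving factor $1-x$.

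The goal is now to show that this inequality forces $B$ to be finite. I split according to the size of $B$.

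\emph{Case 1: $B$ has zero upper density in the Abel sense}, that is, $(1-x)b(x)\to0$. Then the right side is $o(b(x))+O(1)$. For the left side I would show
\[
b(x)-b(x^2)=\sum_{n\in B}x^n(1-x^n)\gg \sum_{n\in B,\ n\ge 1/(1-x)}x^n
\]
along a suitable sequence $x\to1$, which is comparable to $b(x)$ along that sequence. If $B$ is infinite, then $b(x)\to\infty$, and this gives a contradiction. The delicate subcase is a very sparse $B$, where $b(x^2)$ could be close to $b(x)$. There I would choose $x=1-1/N$ with $N\in B$ large; the term $n=N$ alone contributes a constant to $b(x)-b(x^2)$, and I would need to bootstrap this to an unbounded quantity. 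If the first-order argument fails to give unboundedness, I would fall back on the additional nonnegative term $f(-x)^2$ and on the exact polynomial error $P$, or work directly with partial sums $\sum_{m\le M}\bigl(r_2(2m+1)-r_2(2m)\bigr)$. The latter telescope into a count of pairs straddling $M$, and this count can be compared with the number of elements of $B$ up to $M$.

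\emph{Case 2: $B$ has positive upper density.} Here $(1-x)b(x)^2$ is of order $b(x)$. Now $A$ is infinite but possibly thin, and I would instead use the term $f(-x)^2$ together with a comparison between $f(x)^2$ and $f(x^2)$. In terms of coefficients, I would try to exploit the fact that $r_2(A,2m)$ receives the diagonal contribution $a=a'=m$ when $m\in A$, whereas odd numbers never do. This gives an averaged deficit $\sum_{m\le M}\bigl(r_2(2m+1)-r_2(2m)\bigr)\le -|A\cap[0,M]|+(\text{error})$. The error is controlled by $\sum_{a,a'}$ counting how the ordered pairs summing to $2m$ or to $2m+1$ split across the boundary $M$. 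Since $A$ is infinite, $|A\cap[0,M]|\to\infty$, so the goal is an error bound of $o(|A\cap[0,M]|)$ along a subsequence of $M$.

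I expect the main obstacle to be exactly this error control, in both cases: making the cancellation uniform enough to beat a divergent but possibly very slowly growing main term, whether that term is $|A\cap[0,M]|$ or $b(x)$.
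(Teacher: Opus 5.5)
Your generating-function set-up is correct and matches the paper's. After substituting $f=\frac1{1-x}-b$, your identity for $4xH(x)$ is exactly the paper's identity. The gap is at the decisive point. The inequality $H(x)\ge -C$ is strictly weaker than the hypothesis and does \emph{not} force $B$ to be finite, and neither does your partial-sum fallback.

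Take $B=\{2^k,2^k+1:k\ge1\}$ and $A=\mathbb N_0\setminus B$.
\begin{itemize}
\item $b(x)=(1+x)\sum_{k\ge1}x^{2^k}=O(\log\frac1{1-x})$, so $(1-x)b(x)^2\to0$.
\item $B$ has boundedly many elements in each $(X,2X]$, so $0\le b(x)-b(x^2)\le b(x)-xb(x^2)=\sum_{n\in B}x^n(1-x^{n+1})=O(1)$.
\item $b(-x)=(1-x)\sum_{k\ge1}x^{2^k}\to0$, so $f(-x)\to\frac12$.
\end{itemize}
By $4xH(x)=(1-x)b(x)^2+\frac1{1+x}-(1+x)f(-x)^2-2\bigl(b(x)-xb(x^2)\bigr)$, $H$ is bounded on $(0,1)$. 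So your displayed inequality holds, yet $B$ is infinite.

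A direct count via $r_2(A,n)=\lfloor n/2\rfloor+1-|B\cap[0,n]|+\rho_B(n)$ goes further. Here $\rho_B(n)$ is the number of pairs $b<b'$ in $B$ with $b+b'=n$. The count gives $\sum_{m\le M}\bigl(r_2(A,2m+1)-r_2(A,2m)\bigr)\in\{-1,0\}$ for every $M\ge1$.

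The hypothesis does fail for this $A$. For $2\le j<k$ and $2m=(2^j+1)+(2^k+1)$ one has $r_2(A,2m+1)-r_2(A,2m)=-1$. But this happens only on a sparse set of $m$, and the negative contributions cancel on average, so they are invisible both to the Abel mean $H(x)$ and to the partial sums.

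This breaks both of your cases:
\begin{itemize}
\item Case~1: the claim that $\sum_{n\in B,\,n\ge1/(1-x)}x^n$ is comparable to $b(x)$ is false for such $B$. The left side is $O(1)$ while $b(x)\to\infty$.
\item Case~2: the hoped-for bound ``error $=o(|A\cap[0,M]|)$'' has no mechanism behind it. In general the error is of the same order as the main term; for $A=\mathbb N_0$ the two cancel exactly.
\end{itemize}

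What the analytic inequality can deliver, and all the paper extracts from it, is local sparsity.
\begin{itemize}
\item Iterating $\alpha(x)^2\ge\alpha(x^2)-(1+4\kappa)(1-x)$ for $\alpha(x)=(1-x)f(x)$, along the chain $x,x^2,x^4,\dots$ down to a fixed range near $1$, gives $b(x)=O(\log\frac1{1-x})$. This needs no case split on the density of $B$ and is what replaces your Case~2.
\item Then $(1-x)b(x)^2$ is bounded, hence so is $\sum_{n\in B}x^n(1-x^{n+1})$. Taking $x=e^{-1/X}$ gives $|B\cap(X,2X]|\le M$.
\end{itemize}

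The finishing step must use the hypothesis for individual $m$. From the complement identity above, $\rho_B(2m+1)\ge\rho_B(2m)$ for large $m$. That is, every large even sum of two distinct elements of $B$ is followed by an odd one.

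The paper combines this with the dyadic bound and Ramsey's theorem:
\begin{itemize}
\item Colour $B$ with $M+1$ colours so that equal colours are more than a factor $2$ apart.
\item Take a rapidly growing one-parity sequence with $X+t\subseteq B$, and colour each pair $i<j$ by the colours of $u<v$ in $B$ with $u+v=x_i+x_j+2t+1$.
\item Iterating yields shifts $0=t_0<t_1<\dots<t_{M+1}$ and an infinite $X_{M+1}$ with $X_{M+1}+t_i\subseteq B$.
\item This puts $M+1$ elements of $B$ into some $(x,2x]$, contradicting the dyadic bound.
\end{itemize}
Your proposal has nothing playing this role, so as it stands it cannot reach the conclusion.
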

Balasubramanian~\cite{Balasubramanian1987} proved that eventual monotonicity of \(r_2(A,n)\) implies
\(|[1,N]\setminus A|=O(\log N)\). In fact, our Theorem~\ref{thm:monotonicity}
shows that \(|[1,N]\setminus A|=O(1)\), which strengthens the result of Balasubramanian~\cite{Balasubramanian1987}.

A related line of research concerns the inverse problem for representation functions: given an arithmetic function \(f\), one asks whether there exists a set \(A\) whose representation function is equal, or sufficiently close,
to \(f\). Ruzsa~\cite{Ruzsa1990} constructed an additive basis whose representation function has bounded mean square. Nathanson
\cite{Nathanson2004,Nathanson2005} developed a general theory of inverse problems for representation functions and showed that broad classes of functions can be realized as representation functions of additive bases for the integers.

There are, however, strong restrictions on how
regularly a representation function can approximate a prescribed function. The first fundamental result in this direction is the Erd\H{o}s--Fuchs theorem, which states that for every \(c>0\), there
is no set \(A\subseteq\mathbb N\) for which
\[
 \sum_{n\leq N} r_1(A,n)
 =cN+o\bigl(N^{1/4}(\log N)^{-1/2}\bigr).
\]
Montgomery and Vaughan~\cite{MontgomeryVaughan1990} later strengthened this result by removing the logarithmic factor. While the Erd\H{o}s--Fuchs theorem~\cite{ErdosFuchs} approximates the sum of additive representation functions, Erdős and Sárközy \cite{ErdosSarkozyI,ErdosSarkozyII} obtained a corresponding obstruction to pointwise approximation. They proved that if $f(n)\rightarrow+\infty$, $f(n+1)\ge f(n)$ for $n>n_{0}$ and $f(n)=o(\frac{n}{(\log n)^{2}})$, then 
$$\max_{n\le N}|r_{1}(A,n)-f(n)|=o((f(N))^{1/2})$$
cannot hold. However, as Sárközy and Sós pointed out in \cite{SarkozySos1997}, the situation of the problem may change completely if a zero-density set of sums can be neglected. This led to the following problem.
\begin{problem}[Sárközy--Sós~\cite{SarkozySos1997}]\label{p2}
Does there exist an arithmetic function $f$ satisfying $f(n) \to \infty$, $f(n+1) \ge f(n)$ for $n > n_0$, and $f(n) = o\left(\frac{n}{(\log n)^2}\right)$, and a set $A$ such that
\[
|r_1(A,n) - f(n)| = o((f(n))^{1/2})
\]
holds on a sequence of integers $n$ whose density is $1$?
\end{problem}
Representation functions outside density-zero
exceptional sets were subsequently studied by Fang~\cite{Fang2022}. 
For a set \(E\subseteq\mathbb N\) and a real number \(x\geq1\), let
\(
E(x)=|E\cap[1,x]|.
\)
A set \(S\subseteq\mathbb N\) has density \(\delta\) if
\(
\lim_{x\to\infty}\frac{|S\cap[1,x]|}{x}=\delta.
\)
Our second result answers this problem affirmatively in a strong sense.

\begin{theorem}\label{thm:density}
There exist an infinite set \(A\subseteq\mathbb N_0\) and an increasing arithmetic function \(f\) such that
\(f(n)\to\infty\) and \(f(n)=O(\log\log n)\). Let
\(
E=\{n\in\mathbb N:|r_1(A,n)-f(n)|\ne1\}.
\)
Then
\(
|E\cap[1,x]|
=O\bigl(x^{15/16}(\log\log x)^2\bigr)
\)
as \(x\to\infty\).
\end{theorem}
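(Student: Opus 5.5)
The plan is to build \(A\) as a union of finitely many (for each \(x\)) ``complementary digit pairs''. Each pair contributes exactly \(2\) ordered representations to almost every \(n\). We then take \(f(n)=2k(n)+1\), where \(k(n)\) is the number of pairs active below \(n\). Since \(r_1(A,n)\) is even except when \(n\in 2A\), generic \(n\) should have \(r_1(A,n)=2k(n)=f(n)-1\), and the exceptional set must be shown to be of size \(O(x^{15/16}(\log\log x)^2)\).

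\textbf{Basic building block.} Fix a partition of the binary digit positions \(\mathbb N_0=S\sqcup S'\). Let \(B(S)\) be the set of integers whose binary support lies in \(S\). Every \(n\) then has exactly one representation \(n=b+b'\) with \(b\in B(S)\), \(b'\in B(S')\), namely splitting its digits. This gives exactly two ordered representations in \(B(S)\cup B(S')\), except for \(n=0\), and except for \(n\) lying in \(B(S)+B(S)\) or \(B(S')+B(S')\). If \(S\) and \(S'\) each have density \(1/2\) in every long interval (e.g.\ residue classes mod \(2\)), then \(|(B(S)+B(S))\cap[1,x]|\ll x^{\log 3/\log 4}\). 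This is a standard count: each block of two positions has three possible digit patterns after carries, up to a bounded factor.

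\textbf{The pairs.} I would take pairs \(j=1,2,\dots\) with distinct splittings \(S_j\) and put
\[
A=\bigcup_j\Bigl(B(S_j)\cup \bigl(B(S_j')\cap[M_j,\infty)\bigr)\Bigr).
\]
The thresholds are chosen doubly exponentially, \(M_j=2^{2^{cj}}\), so that \(k(n)=\#\{j:M_j\le n\}\asymp\log\log n\); this gives \(f\) increasing, \(f\to\infty\) and \(f=O(\log\log n)\). The truncation also makes \(r_1\) finite. For \(n<M_j\), pair \(j\) gives at most sums from \(B(S_j)+B(S_j)\), which is a sparse set. For \(n\ge M_j\), the unique split loses its representation only if the \(S_j'\)-part of \(n\) is below \(M_j\). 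The number of such \(n\le x\) is at most \(|B(S_j)\cap[1,x]|\cdot M_j\ll x^{1/2}M_j\). Summing over the \(O(\log\log x)\) pairs with \(M_j\le x\) requires care with the last pair; I would place the \(M_j\) so that \(M_j\le x^{7/16}\) for all but boundedly many relevant scales, or simply absorb \(x\)'s near \(M_j\) in the bound.

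\textbf{Main obstacle: cross-interference.} The hardest step is to control sums \(a+a'\) with \(a,a'\) coming from different pairs \(i\neq j\) (or same-side sums within one pair). We need \(|(B(T)+B(T'))\cap[1,x]|\ll x^{15/16}\) whenever \(T\in\{S_i,S_i'\}\) and \(T'\in\{S_j,S_j'\}\) with \(i\ne j\). I would choose the splittings via base \(16\) blocks of four binary positions. Pair \(j\) allots to \(S_j\) two of the four positions in each block, in a pattern depending on \(j\); to have infinitely many patterns the pattern is allowed to vary with the block index, e.g.\ pair \(j\) uses a fixed non-trivial pattern on blocks indexed by a residue class. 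The aim is that for \(i\neq j\) the two sets are never complementary on some definite proportion of blocks. On such a block the sum, with the carry in, takes at most \(15\) of the \(16\) possible digit values. A carry-automaton count then gives \(\ll x^{\log_{16}15+o(1)}\), which I hope tightens to \(x^{15/16}\)-type savings per block after choosing the pattern family appropriately.

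\textbf{Assembling the bound.} There are \(O((\log\log x)^2)\) pairs \((i,j)\) with \(M_i,M_j\le x\). Adding the sparse same-pair sets and the truncation losses then yields the total exceptional count \(O(x^{15/16}(\log\log x)^2)\). Every other \(n\) has \(r_1(A,n)=2k(n)=f(n)-1\).
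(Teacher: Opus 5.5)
There is a genuine gap, and the most serious one is in your definition of $f$. You count pair $j$ in $k(n)$ as soon as $n\ge M_j$, but just above $M_j$ the split $n=\pi_{S_j}(n)+\pi_{S_j'}(n)$ fails for many $n$. For example, if the bit position $\log_2M_j$ lies in $S_j$, then \emph{every} $n\in[M_j,2M_j)$ has $S_j'$-part below $M_j$. In general the number of such $n\le x$ is of order $M_j(x/M_j)^{1/2}=(xM_j)^{1/2}$, which exceeds $x^{15/16}$ for all $x$ (somewhat larger than $M_j$) up to $M_j^{8/7}$. Outside the sparse sets these $n$ have $r_1(A,n)=2k(n)-2=f(n)-3$, so they lie in $E$. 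Hence the bound fails along $x\asymp M_j$, $j\to\infty$. No placement of the $M_j$ avoids this, and it cannot be ``absorbed'', because the estimate must hold for every large $x$.

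The paper resolves exactly this point. It counts index $i$ only from $T_i^2$ on, i.e.\ $K(n)=\abs{\{i:T_i^2\le n\}}$. Beyond $T_i^2$ a small projection is rare: at most $4^{7m/8}$ of the $n<4^m$ (Lemma~\ref{dens:lem:small-projection}). An index in $J(n)$ not counted by $K(n)$ must satisfy $2T_i\le n<T_i^2$, and since $T_{i+1}=T_i^4$ these windows are pairwise disjoint. So off $E_0$ one has $r_1(A,n)\in\{2K(n),2K(n)+2\}$. The choice $f=2K+1$ is made so that $f$ lies \emph{between} these two values, not because $r_1$ generically equals $f-1$.

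The cross-interference bound, which is where the exponent $15/16$ comes from, is also left open, and the mechanism you sketch cannot deliver it. Excluding one of $16$ values on a proportion $\delta\le1$ of blocks gives at best $x^{1-\delta\log_{16}(16/15)}\ge x^{\log_{16}15}\approx x^{0.977}$. Moreover, suppose pair $j$ departs from a common pattern only on one residue class of blocks. Then the moduli must be unbounded to get infinitely many pairs, and $S_i$ and $S_j'$ are complementary on all but a vanishing proportion of blocks. So no uniform $\delta>0$ exists.

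Two ingredients are missing here. The first is the partitions $U_i^\sigma$ defined by bit $i-1$ of the position index; for these, any two noncomplementary labels leave at least $m/8$ of the positions in $[0,m)$ uncovered (Lemma~\ref{dens:lem:missing}). The second is the carry fact that at an uncovered base-$4$ position $d_j(n)=c_j\in\{0,1\}$. Each uncovered position therefore halves the count, giving $4^m2^{-m/8}=(4^m)^{15/16}$ (Lemma~\ref{dens:lem:digit-sums}).

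Finally, you truncate only the $S_j'$ halves, so every $B(S_j)$ enters $A$ in full. Sums $B(S_i)+B(S_j)$ with $M_i>x$ then land in $[1,x]$ for infinitely many $i$, and these are not among the $O((\log\log x)^2)$ pairs you count. For the paper's family one even has $\bigcup_i\mathcal B(U_i^0)=\mathbb N_0$. Truncating both halves, $D_i^\sigma=\mathcal B(U_i^\sigma)\cap[T_i,\infty)$, is what limits the relevant labels at scale $4^m$ to $O(\log m)$.
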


\subsection{Proof overview}

%This paper resolves two problems posed by S\'ark\"ozy and S\'os \cite{SarkozySos1997}. The first concerns the eventual monotonicity of \(r_2(A,n)\). The second asks whether \(r_1(A,n)\) can approximate a slowly growing increasing function outside a set of density zero. 

Now we sketch the proofs of Theorems \ref{thm:monotonicity} and \ref{thm:density}.

\medskip
%\noindent \textbf{Monotonicity of \(r_2(A,n)\).}
\noindent \textbf{Proof sketch of Theorem \ref{thm:monotonicity}.}
Let \(B=\mathbb N_0\setminus A\). We first use the assumption \(r_2(A,2m+1)\geq r_2(A,2m)\) to obtain a bound for the generating function of \(B\). This shows that \(B\) is locally sparse. If \(B\) were infinite, we could choose a rapidly increasing sequence in \(B\). The Ramsey theorem then gives an infinite subsequence whose pairwise sums have the same form. Iterating the resulting translation step gives nested infinite subsequences \(X_0\supseteq X_1\supseteq\cdots\) and increasing shifts \(0=t_0<t_1<\cdots\) such that \(X_k+t_i\subseteq B\) whenever \(0\leq i\leq k\); see Figure~\ref{fig:iterated-translates}. Choose a sufficiently large \(x\in X_k\).
Then place many distinct elements \(x+t_i\) in the single interval \((x,2x]\), a contradiction to the local sparsity of \(B\). Hence \(B\) is finite.
\begin{figure}[htbp]
\centering
\begin{tikzpicture}[
    x=0.78cm,
    y=0.78cm,
    >=Latex,
    axis/.style={->,semithick},
    point/.style={circle,fill=blue!55!black,inner sep=1.7pt},
    guide/.style={densely dashed,gray!65},
    every node/.style={font=\small}
]
% Left panel
\node[anchor=west,font=\small\bfseries] at (-2.55,4.05) {(a)};
\node[font=\small\bfseries] at (4.35,4.05)
    {Several translates of the same subsequence lie in \(B\)};

\foreach \y/\lab/\shift in {
    3.15/{X_k+t_0\subseteq B}/0,
    2.05/{X_k+t_1\subseteq B}/0.35,
    0.35/{X_k+t_k\subseteq B}/1.05
}{
    \draw[axis] (0,\y)--(9.35,\y);
    \node[anchor=east] at (-0.25,\y) {\(\lab\)};
    \foreach \z in {1.00,2.75,4.50,6.25,8.00}
        \node[point] at ({\z+\shift},\y) {};
}
\node at (-1.25,1.20) {\(\vdots\)};
\foreach \z in {1.70,3.45,5.20,6.95,8.70}
    \node at (\z,1.20) {\(\vdots\)};

\node[below=2pt] at (1.00,3.15) {\(x_1+t_0\)};
\node[below=2pt] at (2.75,3.15) {\(x_2+t_0\)};
\node[below=2pt] at (6.25,3.15) {\(\cdots\)};
\node[below=2pt] at (2.05,0.35) {\(x_1+t_k\)};
\node[below=2pt] at (3.80,0.35) {\(x_2+t_k\)};
\node[below=2pt] at (7.30,0.35) {\(\cdots\)};

\draw[guide] (1.00,3.15)--(1.35,2.05)--(2.05,0.35);
\draw[guide] (2.75,3.15)--(3.10,2.05)--(3.80,0.35);

% Arrow
\draw[->,semithick] (9.65,1.75)--(10.35,1.75);

% Right panel
\node[anchor=west,font=\small\bfseries] at (10.55,4.05)
    {(b)\quad Fix \(x\in X_k\) with \(x>t_k\)};
\draw[axis] (10.70,1.75)--(17.10,1.75);
\draw (11.30,1.63)--(11.30,1.87);
\draw (16.55,1.63)--(16.55,1.87);
\node[below=3pt] at (11.30,1.63) {\(x\)};
\node[below=3pt] at (16.55,1.63) {\(2x\)};

\node[point] at (12.05,1.75) {};
\node[point] at (13.00,1.75) {};
\node[point] at (14.05,1.75) {};
\node at (14.85,1.75) {\(\cdots\)};
\node[point] at (15.70,1.75) {};

\node[above=4pt] at (12.05,1.75) {\(x+t_1\)};
\node[below=4pt] at (13.00,1.75) {\(x+t_2\)};
\node[above=4pt] at (14.05,1.75) {\(x+t_3\)};
\node[above=4pt] at (15.70,1.75) {\(x+t_k\)};

\draw[decorate,decoration={brace,mirror,amplitude=4pt}]
    (11.30,0.95)--(16.55,0.95)
    node[midway,below=6pt,align=center]
    {\(x+t_1,\ldots,x+t_k\in B\cap(x,2x]\)};
\end{tikzpicture}
\caption{The iterative translation argument. At stage \(k\), the same infinite subsequence has \(k+1\) distinct translates contained in \(B\). For \(x\in X_k\) with \(x>t_k\), the points \(x+t_1,\ldots,x+t_k\) all lie in \(B\cap(x,2x]\). Taking \(k\) larger than the local sparsity bound gives the contradiction.}
\label{fig:iterated-translates}
\end{figure}

\medskip
\noindent \textbf{Proof sketch of Theorem \ref{thm:density}.}
%\noindent \textbf{Approximation of \(r_1(A,n)\).}
We use base-$4$ expansions. The idea is to
control the representations of \(n\) by splitting its digits in
several prescribed ways.

For each \(i\geq1\), we divide the digit positions into two
complementary periodic sets \(U_i^0\) and \(U_i^1\). We also choose
a threshold \(T_i\). The thresholds increase very rapidly. For each
\(\sigma\in\{0,1\}\), we include in \(A\) every integer at least
\(T_i\) whose nonzero digits occur only in positions from
\(U_i^\sigma\).

Now fix \(n\). For each \(i\), retain the digits of \(n\) in
\(U_i^0\) and set all other digits equal to zero. Do the same with \(U_i^1\). This gives two integers whose sum is \(n\). If both integers are at least \(T_i\), then both belong to \(A\). They give two ordered representations of \(n\). Figure~\ref{fig:digit-partitions} shows these partitions for \(i=1,2,3\).

We next show that almost every representation of \(n\) is obtained
in this way. Two difficulties may occur. First, the two elements in
a representation may come from digit sets that are not complementary.
Many digit positions are then missing from both sets. Secondly, we may have
\(n\geq T_i^2\), while one of the two parts defined by
\(U_i^0\) and \(U_i^1\) is smaller than \(T_i\). In that case, many
digits of \(n\) must be zero.

We place all integers arising from these two cases in an exceptional
set \(E_0\). We show that
\(
|E_0\cap[1,x]|
=
O\bigl(x^{15/16}(\log\log x)^2\bigr).
\)
It remains to count the representations when \(n\notin E_0\).
Let \(K(n)\) be the number of indices \(i\) for which
\(T_i^2\leq n\). Every such index gives two ordered representations.
There may be one more contributing index. Indeed, such an index must
satisfy
\(
2T_i\leq n<T_i^2.
\)
The thresholds are chosen so that the intervals
\([2T_i,T_i^2)\) are pairwise disjoint. Hence the number of indices that contribute representations is either \(K(n)\) or \(K(n)+1\). Since each such index gives exactly
two distinct ordered representations, and there are no other
representations outside \(E_0\), we have
$
r_1(A,n)\in\{2K(n),2K(n)+2\}
$.
Define
$
f(n)=2K(n)+1.
$
This is the integer between the two possible values of \(r_1(A,n)\).
Thus
$
|r_1(A,n)-f(n)|=1
$
for every \(n\notin E_0\). 

\begin{figure}[htbp]
\centering
\begin{tikzpicture}[
cell0/.style={
draw,
fill=black!15,
minimum width=0.48cm,
minimum height=0.48cm,
inner sep=0pt
},
cell1/.style={
draw,
fill=white,
minimum width=0.48cm,
minimum height=0.48cm,
inner sep=0pt
},
rowlabel/.style={
anchor=east,
font=\small
},
poslabel/.style={
font=\scriptsize
},
legend/.style={
font=\scriptsize,
anchor=west
}
]
\node[rowlabel] at (0,0.55) {digit position \(j\)};

\foreach \j in {0,...,15}{
\node[poslabel] at ({0.5+0.5*\j},0.55) {\j};
}

\foreach \i/\b/\y in {1/1/0,2/2/-0.58,3/4/-1.16}{
\node[rowlabel] at (0,\y) {\(i=\i\)};
\foreach \j in {0,...,15}{
\pgfmathtruncatemacro{\s}{mod(floor(\j/\b),2)}
\ifnum\s=0
\node[cell0] at ({0.5+0.5*\j},\y) {};
\else
\node[cell1] at ({0.5+0.5*\j},\y) {};
\fi
}
}

\node[cell0] at (2.2,-2.0) {};
\node[legend] at (2.5,-2.0) {positions in \(U_i^0\)};

\node[cell1] at (5.8,-2.0) {};
\node[legend] at (6.1,-2.0) {positions in \(U_i^1\)};
\end{tikzpicture}
\caption{The periodic partitions of the digit positions for \(i=1,2,3\).
Shaded cells belong to \(U_i^0\), and unshaded cells belong to \(U_i^1\).}
\label{fig:digit-partitions}
\end{figure}

\medskip

\noindent \textbf{Notation.} 
Let $A_1 + A_2 + \dots + A_k = \{a_1 + a_2 + \dots + a_k : a_i \in A_i \text{ for each } i\}$. 
For functions \(F\) and \(G\), with \(G(x)>0\) for all sufficiently
large \(x\), the notation \(F(x)=O(G(x))\) means that
\(
|F(x)|\leq C G(x)
\)
for some constant \(C>0\) and all sufficiently large \(x\), while \(F(x)=o(G(x))\) means that \(F(x)/G(x)\to0\) as \(x\to\infty\).
A subscript in \(O_A(\,\cdot\,)\) indicates that the implied constant
may depend on \(A\). %All logarithms are natural unless a base is displayed explicitly.

\section{Proof of Theorem \ref{thm:monotonicity}}\label{sec:monotonicity}
Let \(F(z)=\sum_{a\in A}z^a\). For \(m\geq0\), let
\(d_m=r_2(A,2m+1)-r_2(A,2m)\).
Choose $m_0$ such that $d_m\geq0$ for every $m\geq m_0$, and define
\[
 \kappa=\sum_{m<m_0}\max\{-d_m,0\},
 \qquad
 D(q)=\sum_{m\geq0}d_mq^{2m}\quad(0<q<1).
\]
Then
\begin{equation}\label{mono:eq:Dlower}
 D(q)\geq-\kappa.
\end{equation}
The generating function of \(r_2(A,n)\) is
\[
 \sum_{n\geq0}r_2(A,n)z^n=\frac{F(z)^2+F(z^2)}2.
\]
Separating the even and odd coefficients, we obtain
\begin{equation}\label{mono:eq:D-F}
 4qD(q)
 =(1-q)F(q)^2-(1+q)F(-q)^2-2qF(q^2).
\end{equation}
Let $B=\Nzero\setminus A$ and
\[
 G(q)=\sum_{b\in B}q^b=\frac1{1-q}-F(q).
\]
We first obtain a logarithmic bound for the generating function of the complement.

\begin{lemma}\label{mono:lem:abelian-log}
We have
\[
 G(q)=O_{A}\!\left(\log\frac1{1-q}\right).
\]
\end{lemma}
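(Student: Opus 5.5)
The plan is to rewrite \eqref{mono:eq:D-F} in terms of \(G\) and use \eqref{mono:eq:Dlower} twice: once in the \(F\)-form to show that \(B\) has Abel density \(0\) at a quantitative rate, and once in the \(G\)-form to get \(G(q)\le(1+\text{small})G(q^2)+C\). Iterating the second inequality along the chain \(q,q^{1/2},q^{1/4},\dots\) then gives the logarithm.

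\textbf{Step 1 (the \(G\)-form).} Substitute \(F(q)=\frac1{1-q}-G(q)\), \(F(-q)=\frac1{1+q}-G(-q)\) and \(F(q^2)=\frac1{1-q^2}-G(q^2)\) into \eqref{mono:eq:D-F}. The main terms cancel exactly, since \(\frac1{1-q}-\frac1{1+q}-\frac{2q}{1-q^2}=0\). This leaves
\[
4qD(q)=-2G(q)+(1-q)G(q)^2+2G(-q)-(1+q)G(-q)^2+2qG(q^2).
\]
The expression \(2G(-q)-(1+q)G(-q)^2\) is a concave quadratic in \(G(-q)\), so it is at most \(\frac1{1+q}\le1\). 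Combining this with \eqref{mono:eq:Dlower} gives
\[
G(q)\bigl(2-x(q)\bigr)\le 2G(q^2)+1+4\kappa,\qquad x(q):=(1-q)G(q)\in[0,1].
\]
Hence \(G(q)\le (1+O(x(q)))\,G(q^2)+C\) as long as \(x(q)\) stays bounded away from \(1\). The task therefore reduces to showing that \(x(q)\) is small, and in fact summable along square-root chains.

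\textbf{Step 2 (\(B\) is sparse; this is the main obstacle).} Here I would use \eqref{mono:eq:D-F} directly in \(F\). Dropping the term \(-(1+q)F(-q)^2\le0\) gives
\[
(1-q)F(q)^2\ge 2qF(q^2)-4q\kappa.
\]
Put \(u(q)=(1-q)F(q)=1-x(q)\). This becomes
\[
u(q)^2\ge \frac{2q}{1+q}\,u(q^2)\bigl(1-\varepsilon(q)\bigr),\qquad \varepsilon(q)=\frac{2\kappa}{F(q^2)}.
\]
Because \(A\) is infinite, \(F(q^2)\to\infty\), so \(\varepsilon(q)\to0\). Fix a base interval \(q_0\in[a,a^{1/2}]\), on which \(u\) is bounded below by a positive constant since \(A\neq\emptyset\). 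Every \(q\) close to \(1\) has the form \(q_0^{2^{-k}}\). Taking logarithms, the quantity \(1-u\) roughly halves at each step, up to additive errors of size \(O(1-q_j)+O(\varepsilon_j)\). So along the chain we get
\[
x(q_k)\lesssim \sum_{j\le k}2^{-(k-j)}\bigl(\varepsilon_j+(1-q_j)\bigr)+2^{-k}.
\]
The delicate point is controlling \(\varepsilon_j\). A first pass only gives \(\varepsilon_j\to0\), which already yields \(x\to0\). I would then bootstrap: once \(x\le\frac12\), we have \(F(q^2)\gg 1/(1-q)\), so \(\varepsilon_j=O(1-q_j)=O(2^{-j})\). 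This gives \(x(q_k)=O(k2^{-k})\), uniformly in \(q_0\) over the compact base interval.

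\textbf{Step 3 (iterate).} Insert the bound from Step 2 into Step 1 along the chain:
\[
G(q_k)\le \bigl(1+O(k2^{-k})\bigr)G(q_{k-1})+C.
\]
The product \(\prod_k\bigl(1+O(k2^{-k})\bigr)\) converges, so \(G(q_k)\le C'\bigl(G(q_0)+k\bigr)\). Since \(k\asymp\log\frac1{1-q_k}\) and \(G(q_0)\) is bounded on the base interval, this gives \(G(q)=O_A\!\left(\log\frac1{1-q}\right)\), as claimed in Lemma~\ref{mono:lem:abelian-log}.
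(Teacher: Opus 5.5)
Your proposal is correct and is essentially the paper's argument. Your bound $2G(-q)-(1+q)G(-q)^2\le\frac{1}{1+q}$ is exactly $-(1+q)F(-q)^2\le 0$ rewritten, so your Step 1 recursion is the paper's $G(q_i)\le\frac{1+q_i}{1+\alpha_i}G(q_{i-1})+\frac{1+4\kappa}{1+\alpha_i}$ with $\alpha_i=1-x(q_i)$, iterated along the same square-root chain after showing $\sum_k x(q_k)=O(1)$ from the same inequality in $F$-form. The only difference is in that summability step: you take logarithms of the multiplicative form and bootstrap $\varepsilon_j=O(2^{-j})$, giving $x(q_k)=O(k2^{-k})$, whereas the paper first forces $\alpha_i\ge\frac14$ via $\alpha_i\ge\sqrt{\alpha_{i-1}/2}$ and then uses a $\frac45$-contraction; either works, provided you choose the base point $a$ with $F(a)\ge 4\kappa$ so that $\varepsilon_j\le\frac12$ throughout.
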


\begin{proof}
Let $\alpha(q)=(1-q)F(q)$. By~\eqref{mono:eq:Dlower} and
\eqref{mono:eq:D-F},
\(
 (1-q)F(q)^2\geq2qF(q^2)-4q\kappa.
\)
Since $\alpha(q^2)=(1-q^2)F(q^2)$, we get
\(
 \alpha(q)^2
 \geq\frac{2q}{1+q}\alpha(q^2)-4q\kappa(1-q).
\)
Since $0\leq\alpha(q^2)\leq1$ and
$2q/(1+q)=1-(1-q)/(1+q)$, it follows that
\(
 \alpha(q)^2\geq\alpha(q^2)-(1+4\kappa)(1-q).
\)

Since $A$ is infinite, $F(u)\to\infty$ as $u$ tends to 1. Let \(s\in(0,1)\) be such that
\(
F(s^2)\geq 2(1+4\kappa).
\)
For \(q\in(s,1)\), let \(L\) be the least positive integer such that \(q^{2^L}<s\), and define
$q_i=q^{2^{L-i}}$, $0\leq i\leq L$.
Then \(q_L=q\) and \(q_{i-1}=q_i^2\). By the minimality of \(L\),
\(s^2\leq q_0<s\).
Moreover, since \(q_i=q_0^{2^{-i}}\),
\(
1-q_i\leq-\log q_i
=2^{-i}(-\log q_0)
\leq(-2\log s)2^{-i}.
\)

Let $\alpha_i=\alpha(q_i)$.
We have
\begin{equation}\label{e1}
\alpha_i^2
\geq
\alpha_{i-1}-(1+4\kappa)(1-q_i).    
\end{equation}
Furthermore,
\(
\alpha_{i-1}
= (1-q_{i-1})F(q_{i-1})
\geq (1-q_i)F(s^2)
\geq2(1+4\kappa)(1-q_i),
\)
since \(q_{i-1}\geq q_0\geq s^2\). Thus
\(\alpha_i\geq\sqrt{\alpha_{i-1}/2}\).
Also,
\(
\alpha_0=(1-q_0)F(q_0)
\geq(1-s)F(s^2)>0.
\)
Iterating \(\alpha_i\geq\sqrt{\alpha_{i-1}/2}\), we obtain
\(
\alpha_i\geq 2^{-1+2^{-i}}\alpha_0^{2^{-i}}.
\)
Since \(\alpha_0\) is bounded below by a positive constant depending only on
\(A\), there exists \(I=I(A)\) such that
\(\alpha_i\geq1/4\) for every \(i\geq I\).

Since \(\alpha_i\geq 1/4\), it follows that
\(
1-\alpha_i
\leq \frac{4}{5}(1-\alpha_{i-1})
+\frac{4(1+4\kappa)}{5}(1-q_i)
\) 
by \eqref{e1}.
Iterating this inequality yields
\[
1-\alpha_i
\leq
\left(\frac{4}{5}\right)^{i-I}(1-\alpha_I)
+\frac{4(1+4\kappa)}{5}
\sum_{j=I+1}^{i}
\left(\frac{4}{5}\right)^{i-j}(1-q_j).
\]
Since \(1-q_j=O_A(2^{-j})\), we have
\[
\begin{aligned}
\sum_{j=I+1}^{i}
\left(\frac45\right)^{i-j}(1-q_j)
&=
O_A\left(
\sum_{j=I+1}^{i}
\left(\frac45\right)^{i-j}2^{-j}
\right)\\
&=
O_A\left(
\left(\frac45\right)^i
\sum_{j=I+1}^{i}
\left(\frac58\right)^j
\right)\\
&=
O_A\left(\left(\frac45\right)^i\right).
\end{aligned}
\]
Hence \(1-\alpha_i=O_A((4/5)^i)\) and
\(
\sum_{i=I}^{L}(1-\alpha_i)=O_A(1).
\)

Since \(F(u)=1/(1-u)-G(u)\), we have
\(1-\alpha_i=(1-q_i)G(q_i)\). Substituting this identity into
\(
(1-\alpha_i)(1+\alpha_i)
\leq 1-\alpha_{i-1}+(1+4\kappa)(1-q_i),
\)
we obtain
\[
G(q_i)
\leq
\frac{1+q_i}{1+\alpha_i}G(q_{i-1})
+\frac{1+4\kappa}{1+\alpha_i}.
\]
For \(i\geq I\), the second term is \(O_A(1)\), while
\(
\frac{1+q_i}{1+\alpha_i}
\leq
\frac{2}{1+\alpha_i}
=
\frac{1}{1-(1-\alpha_i)/2}.
\)
Since \(0\leq(1-\alpha_i)/2\leq3/8\) and
$
-\log_2(1-x)\leq2x$
for $0\leq x\leq3/8$,
we have, for \(I\leq r\leq\ell\leq L\),
\[
\begin{aligned}
\prod_{i=r}^{\ell}
\frac{1+q_i}{1+\alpha_i}
\leq
\prod_{i=r}^{\ell}
\frac{1}{1-(1-\alpha_i)/2}
=
2^{
\sum_{i=r}^{\ell}
-\log_2\left(1-\frac{1-\alpha_i}{2}\right)
}
\leq
2^{\sum_{i=r}^{\ell}(1-\alpha_i)}
=O_A(1).
\end{aligned}
\]

If \(L<I\), then \(q<s^{2^{-I}}<1\), and hence
\(G(q)\leq 1/(1-s^{2^{-I}})=O_A(1)\). Suppose therefore that
\(L\geq I\). Since \(q_I=q_0^{2^{-I}}<s^{2^{-I}}<1\), we similarly have
\(G(q_I)=O_A(1)\). By iterating the preceding inequality and using the uniform bound for these
products, we obtain \(G(q)=G(q_L)=O_A(L+1)\).

By the minimality of \(L\), \(2^{L-1}(1-q)\leq 2^{L-1}(-\log q)\leq-\log s\). Therefore
\(
L\leq\log_2\frac{1}{1-q}+O_s(1).
\)
This proves the lemma.
\end{proof}

We next prove that the number of elements of \(B\) in \((x,2x]\)
is bounded independently of \(x\).

\begin{lemma}\label{mono:lem:dyadic}
There is an integer $M\geq1$ such that for every $x\geq1$,
\[
 |B\cap(x,2x]|\leq M.
\]
\end{lemma}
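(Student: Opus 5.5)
Evaluating Lemma~\ref{mono:lem:abelian-log} at the single point \(q=1-1/x\) only gives \(|B\cap(x,2x]|=O(\log x)\), since each \(b\le 2x\) contributes \(q^b\ge e^{-2}\) to \(G(q)\). That is Balasubramanian's bound, not a uniform one. So I will go back to identity~\eqref{mono:eq:D-F} and extract a \emph{second-order} inequality that isolates the dyadic block.

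\textbf{Step 1 (rewrite~\eqref{mono:eq:D-F} in terms of \(G\)).} Write \(F(q)=\frac1{1-q}-G(q)\), \(F(-q)=\frac1{1+q}-G(-q)\) and \(F(q^2)=\frac1{1-q^2}-G(q^2)\). Expanding, the main terms \(\frac{1}{1-q}-\frac{1}{1+q}-\frac{2q}{1-q^2}\) cancel exactly. Hence
\[
4qD(q)=-2G(q)+2G(-q)+2qG(q^2)+(1-q)G(q)^2-(1+q)G(-q)^2 .
\]
Since \(D(q)\ge-\kappa\), this gives
\[
2G(q)-2qG(q^2)-2G(-q)\le (1-q)G(q)^2+4\kappa .
\]
By Lemma~\ref{mono:lem:abelian-log}, the right side is \(O_A\bigl((1-q)\log^2\frac1{1-q}\bigr)+O(1)=O_A(1)\).

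\textbf{Step 2 (reading off the dyadic block).} The left side equals \(2\sum_{b\in B}q^b(1-(-1)^b)-2q\sum_{b\in B}q^{2b}\), that is,
\[
4\sum_{b\in B,\ b\text{ odd}}q^b-2\sum_{b\in B}q^{2b+1}.
\]
The obstacle is that this mixes parities and the subtracted term is itself of size \(G(q^2)\), which may be of order \(\log\) as well. My plan is to telescope. Apply the inequality at \(q,q^2,q^4,\dots\), weighted so that the negative term at level \(q^{2^j}\) is absorbed by the positive terms at the next level.

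A cleaner alternative, which I would try first, is to use Step 1 with the roles combined. The odd part of \(G\) minus half of \(G(q^2)\) is bounded. Applying the same argument to the full inequality chain from Lemma~\ref{mono:lem:abelian-log} gives the consequence \(G(q)-G(q^2)\le O_A(1)\) for all \(q\) near \(1\). This uses \(G(q^2)\ge G(q)-\sum_b(q^b-q^{2b})\) and the bounded error terms.

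\textbf{Step 3 (from \(G(q)-G(q^2)=O(1)\) to the lemma).} Take \(q=1-1/(4x)\). Every \(b\in(x,2x]\) then satisfies
\[
q^b-q^{2b}=q^b(1-q^b)\ge e^{-1/2}\bigl(1-e^{-1/4}\bigr)=c>0 .
\]
All other terms \(q^b-q^{2b}\) are nonnegative. Therefore
\[
c\,|B\cap(x,2x]|\le G(q)-G(q^2)=O_A(1),
\]
and we may take \(M\) to be the resulting constant; small \(x\) are trivial.

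The main obstacle is Step 2: obtaining a bound for the \emph{difference} \(G(q)-G(q^2)\), rather than for \(G\) itself, from the signed identity. The parity twist \(G(-q)\) is the delicate point. I would handle it by also applying \(D\ge-\kappa\) at \(q^2\) and adding suitable multiples, so that the odd/even imbalance cancels up to \(O(1)\).
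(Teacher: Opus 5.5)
Your Step 1 identity is correct, but the inequality you derive from it has already discarded the information you need, and Step 2 never recovers it. By dropping only $-(1+q)G(-q)^2$, you are left with $-2G(-q)$, which has no sign. The resulting inequality $4\sum_{b\in B,\,b\text{ odd}}q^b-2\sum_{b\in B}q^{2b+1}\le(1-q)G(q)^2+4\kappa$ is vacuous whenever $B$ consists of even numbers, because the left side is then $\le0$ for every $q$. So no telescoping or weighted combination of it at $q,q^2,q^4,\dots$ can control even elements of $B$. For a concrete example, take an even set with $k$ elements in $(2^{2^k},2^{2^k+1}]$ for each $k$. It satisfies all these inequalities and the bound of Lemma~\ref{mono:lem:abelian-log}, yet its dyadic counts are unbounded. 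Your ``cleaner alternative'' only asserts $G(q)-G(q^2)=O_A(1)$. The justification offered, $G(q^2)\ge G(q)-\sum_b(q^b-q^{2b})$, is an identity and yields nothing, and ``applying $D\ge-\kappa$ at $q^2$ and adding multiples'' is not an argument. The one nontrivial step of the lemma is therefore missing.

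The missing observation is that the parity twist does not need to be cancelled at all. The two $G(-q)$ terms combine with $\frac1{1+q}$ into a perfect square:
$(1+q)G(-q)^2-2G(-q)+\frac1{1+q}=(1+q)\bigl(G(-q)-\frac1{1+q}\bigr)^2=(1+q)F(-q)^2\ge0$.
Discarding the whole square, rather than only its quadratic part, gives $2\bigl(G(q)-qG(q^2)\bigr)\le(1-q)G(q)^2+\frac1{1+q}+4\kappa=O_A(1)$ by Lemma~\ref{mono:lem:abelian-log}. This is exactly how the paper proceeds. It is also just the inequality $(1-q)F(q)^2\ge2qF(q^2)-4q\kappa$ from the proof of Lemma~\ref{mono:lem:abelian-log}, rewritten in terms of $G$, which is presumably what your alternative was reaching for.

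Since $G(q)-qG(q^2)\ge G(q)-G(q^2)=\sum_{b\in B}q^b(1-q^b)$, your Step 3 then goes through. One small correction there: with $q=1-1/(4x)$ one has $q^{2x}<e^{-1/2}$, so take $q=e^{-1/(4x)}$ or use a slightly smaller constant.
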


\begin{proof}
Substituting $F(q)=1/(1-q)-G(q)$ into~\eqref{mono:eq:D-F}, we obtain
\[
(1+q)\left(G(-q)-\frac1{1+q}\right)^2
   +2\bigl(G(q)-qG(q^2)\bigr) 
 =(1-q)G(q)^2+\frac1{1+q}-4qD(q).    
\]
Moreover,
\[
 G(q)-qG(q^2)
 =\sum_{b\in B}q^b(1-q^{b+1})\geq0.
\]
By Lemma~\ref{mono:lem:abelian-log} and~\eqref{mono:eq:Dlower}, $(1-q)G(q)^2+\frac1{1+q}-4qD(q)$ is bounded above as $q$ tends to 1. It follows that \(\sum_{b\in B}q^b(1-q^{b+1})\) is uniformly bounded. Let $X$ be sufficiently large and take $q=e^{-1/X}$. If $X<b\leq2X$,
then $q^b\geq e^{-2}$ and $1-q^{b+1}\geq1-e^{-1}$.
Thus every $b\in B\cap(X,2X]$ contributes at least
$e^{-2}(1-e^{-1})$ to this sum. This proves Lemma \ref{mono:lem:dyadic} for all sufficiently large \(X\). The values of \(X\) in a bounded interval are controlled by increasing \(M\), since \(B\cap[0,2X_0]\) is finite for every fixed \(X_0\).
\end{proof}

For $n\geq0$, let
\[
 B(n)=|B\cap[0,n]|,
 \qquad
 \rho_{B}(n)
 =|\{(b,b')\in B^2:b<b',\ b+b'=n\}|.
\]

\begin{lemma}\label{mono:lem:complement-identity}
For every $n\geq0$,
\begin{equation}\label{mono:eq:complement-identity}
 r_2(A,n)
 =\left\lfloor\frac n2\right\rfloor+1-B(n)+\rho_{B}(n).
\end{equation}
Thus, for all sufficiently large $m$,
\begin{equation}\label{mono:eq:rho-ineq}
 \rho_{B}(2m+1)\geq\rho_{B}(2m).
\end{equation}
\end{lemma}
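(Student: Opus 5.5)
The identity \eqref{mono:eq:complement-identity} comes from inclusion--exclusion on the unordered pairs with sum \(n\). I will count the pairs \(\{a,a'\}\) with \(a\le a'\) and \(a+a'=n\), taken in \(\Nzero\). There are \(\lfloor n/2\rfloor+1\) of them, namely \((a,n-a)\) for \(0\le a\le\lfloor n/2\rfloor\). From these I remove the pairs with at least one entry in \(B\).

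Let \(P_1\) be the set of pairs whose smaller entry \(a\) lies in \(B\), and \(P_2\) the set whose larger entry \(n-a\) lies in \(B\). The map \(a\mapsto a\) identifies \(P_1\) with \(B\cap[0,\lfloor n/2\rfloor]\). The map \(a\mapsto n-a\) identifies \(P_2\) with \(B\cap[\lceil n/2\rceil,n]\). So
\[
|P_1|+|P_2|=B(n)+\bigl[\,n\text{ even and }n/2\in B\,\bigr],
\]
since \(n/2\) is counted twice exactly when \(n\) is even. The intersection \(P_1\cap P_2\) consists of pairs with both entries in \(B\). Those with \(a<a'\) are counted by \(\rho_B(n)\). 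The remaining one is the diagonal pair \(a=a'=n/2\), which occurs when \(n\) is even and \(n/2\in B\). The diagonal term therefore cancels the double count, and
\[
|P_1\cup P_2|=B(n)-\rho_B(n).
\]
Subtracting this from \(\lfloor n/2\rfloor+1\) gives \eqref{mono:eq:complement-identity}. The only point needing care is this diagonal bookkeeping; the rest is routine.

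For \eqref{mono:eq:rho-ineq}, apply the identity at \(n=2m+1\) and at \(n=2m\) and subtract. Since \(\lfloor(2m+1)/2\rfloor=\lfloor 2m/2\rfloor=m\), the floor terms cancel, and
\[
r_2(A,2m+1)-r_2(A,2m)=-\bigl(B(2m+1)-B(2m)\bigr)+\rho_B(2m+1)-\rho_B(2m).
\]
The hypothesis of Theorem~\ref{thm:monotonicity} makes the left side nonnegative for \(m\ge m_0\). Hence
\[
\rho_B(2m+1)-\rho_B(2m)\ \ge\ B(2m+1)-B(2m)=\bigl[\,2m+1\in B\,\bigr]\ \ge\ 0.
\]
This proves \eqref{mono:eq:rho-ineq} for every \(m\ge m_0\).
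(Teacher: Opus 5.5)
Your proof is correct and follows essentially the same route as the paper. The paper notes that each $b\in B\cap[0,n]$ lies in exactly one unordered pair $\{b,n-b\}$, so pairs with two distinct entries in $B$ are counted twice and all others (including the diagonal one) once, which is the same bookkeeping as your inclusion--exclusion with the diagonal correction; the deduction of $\rho_B(2m+1)\ge\rho_B(2m)$ by subtracting the identities at $2m+1$ and $2m$ is identical to the paper's.
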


\begin{proof}
There are \(\lfloor n/2\rfloor+1\) unordered representations of
\(n\) in \(\Nzero\). Counting the elements of \(B\cap[0,n]\) occurring
in these representations counts each representation with two distinct
entries in \(B\) twice and every other representation meeting \(B\)
once. Hence
\[
r_2(A,n)=\left\lfloor\frac n2\right\rfloor+1-B(n)+\rho_B(n).
\]
Since \(B(2m+1)-B(2m)=|B\cap\{2m+1\}|\), we obtain
\[
d_m=\rho_B(2m+1)-\rho_B(2m)-|B\cap\{2m+1\}|.
\]
Thus \(d_m\ge0\) implies
\(\rho_B(2m+1)\ge\rho_B(2m)\).
\end{proof}

It follows that if a sufficiently large even integer is the sum of two distinct elements of \(B\), then the following odd integer is also the sum of two distinct elements of \(B\).

\begin{corollary}\label{mono:cor:successor-sum}
If $x<y$ are in $B$, $x\equiv y\pmod2$, and $x+y$ is sufficiently large,
then there exist $u<v$ in $B$ such that \(u+v=x+y+1\).
\end{corollary}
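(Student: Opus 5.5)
This is a direct consequence of inequality~\eqref{mono:eq:rho-ineq} in Lemma~\ref{mono:lem:complement-identity}. Let $m_1$ be such that $\rho_B(2m+1)\geq\rho_B(2m)$ for every $m\geq m_1$. Interpret ``$x+y$ sufficiently large'' as $x+y\geq 2m_1$.

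Take $x<y$ in $B$ with $x\equiv y\pmod 2$ and $x+y\geq 2m_1$. The parity condition makes $x+y$ even, so we can write $x+y=2m$ with $m\geq m_1$. The pair $(x,y)$ has $x<y$, both entries in $B$, and sum $2m$. It therefore lies in the set counted by $\rho_B(2m)$, so $\rho_B(2m)\geq1$.

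By~\eqref{mono:eq:rho-ineq}, $\rho_B(2m+1)\geq\rho_B(2m)\geq1$. By the definition of $\rho_B$, there is then a pair $(u,v)\in B^2$ with $u<v$ and $u+v=2m+1=x+y+1$, which is the claim.

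No step here is a real obstacle; the content lies entirely in Lemma~\ref{mono:lem:complement-identity}. The only point needing care is that ``sufficiently large'' refers to the threshold $m_1$ from~\eqref{mono:eq:rho-ineq}. That threshold is $m_0$, the point beyond which $d_m\geq0$, because $d_m\geq0$ forces $\rho_B(2m+1)\geq\rho_B(2m)$. The parity hypothesis is exactly what guarantees that $x+y$ is even, so that~\eqref{mono:eq:rho-ineq} applies to it.
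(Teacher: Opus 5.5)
Your proposal is correct and follows the paper's proof exactly: write $x+y=2m$, observe $\rho_B(2m)\ge 1$ from the pair $(x,y)$, and apply \eqref{mono:eq:rho-ineq} to obtain $u<v$ in $B$ with $u+v=2m+1$. You are also right that the threshold is $m_0$, since the paper's computation gives $d_m=\rho_B(2m+1)-\rho_B(2m)-|B\cap\{2m+1\}|$.
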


\begin{proof}
Write \(x+y=2m\). Since \(x<y\) and \(x,y\in B\), we have
\(\rho_B(2m)\ge1\). Hence, for all sufficiently large \(m\),
\(
\rho_B(2m+1)\ge \rho_B(2m)\ge1,
\)
so there exist \(u<v\) in \(B\) with \(u+v=2m+1\).
\end{proof}
%We shall use the following standard form of the infinite Ramsey theorem~\cite{Ramsey1930}.
%\begin{theorem}[Ramsey~\cite{Ramsey1930}]\label{Ramsey}
%Let $\Gamma$ be an infinite class, and $\mu$ and $r$ positive integers; and let all those sub-classes of $\Gamma$ which have exactly $r$ members, or, as we may say, let all $r$-combinations of the members of $\Gamma$ be divided in any manner into $\mu$ mutually exclusive classes $C_i$ $(i=1,2,\ldots,\mu)$, so that every $r$-combination is a member of one and only one $C_i$; then, assuming the axiom of selections, $\Gamma$ must contain an infinite sub-class $\Delta$ such that all the $r$-combinations of the members of $\Delta$ belong to the same $C_i$.
%\end{theorem}

The next lemma finds two further translations contained in \(B\).

\begin{lemma}\label{mono:lem:two-translates}
Suppose $B\subseteq\Nzero$ satisfies $|B\cap(x,2x]|\le M$ for some constant $M$ and all $x\ge1$. 
Moreover, suppose that for any $x<y\in B$ of the same parity with $x+y$ sufficiently large, there exist $u<v\in B$ such that $u+v=x+y+1$. Let $X=\{x_1<x_2<\cdots\}$ be a sequence of integers of one parity such that
\begin{equation}\label{mono:eq:rapid-growth}
\frac{x_{j-1}}{x_j}\longrightarrow0.
\end{equation}
If $X+t\subseteq B$ for some integer $t\geq0$, then there exist an infinite subsequence $X'\subseteq X$ and integers $p,p'$ such that $p+p'=2t+1$, $X'+p\subseteq B$, and $X'+p'\subseteq B$. In particular, $\max\{p,p'\}\geq t+1$.
\end{lemma}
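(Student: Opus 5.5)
Apply the successor-sum hypothesis to pairs of translated elements of \(X\), record the shape of the resulting representation, and use Ramsey's theorem to make that shape constant along an infinite subsequence.

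\textbf{Step 1: the successor sums.} For \(i<j\), the elements \(x_i+t\) and \(x_j+t\) lie in \(B\) and have the same parity, since all \(x_j\) share one parity. Their sum \(x_i+x_j+2t\) tends to infinity. So for all large pairs we obtain \(u_{ij}<v_{ij}\) in \(B\) with
\[
u_{ij}+v_{ij}=x_i+x_j+2t+1 .
\]

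\textbf{Step 2: locating \(v_{ij}\) and \(u_{ij}\).} Since \(v_{ij}>(x_i+x_j)/2\), we have \(v_{ij}\) between \(x_j/2\) and \(x_j+x_i+2t+1\). By \eqref{mono:eq:rapid-growth}, this is at most \(x_j(1+o(1))\).

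Lemma \ref{mono:lem:dyadic}'s hypothesis \(|B\cap(x,2x]|\le M\) then shows that \(B\) has at most \(O(M)\) elements in this range. Order them increasingly; \(v_{ij}\) is the \(c\)-th of them for some \(c\) bounded in terms of \(M\). Equivalently, \(v_{ij}=x_j+t+\delta_{ij}\). The offset \(\delta_{ij}\) is controlled only through its rank among the boundedly many elements of \(B\) near \(x_j\), not as an absolute number.

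To get a genuine translate we would like \(\delta_{ij}\) to be independent of \(i\) and bounded. Write \(u_{ij}=x_i+t+1-\delta_{ij}\). Then \(u_{ij}\in B\) with \(u_{ij}\le (x_i+x_j+2t+1)/2\). Either \(u_{ij}\) is small compared with \(x_j\), or \(u_{ij}\) is also near \(x_j\).

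\textbf{Step 3: Ramsey colouring.} Colour each pair \(i<j\) by the following data:
\begin{itemize}
\item the rank of \(v_{ij}\) among the \(O(M)\) elements of \(B\) in \([x_j/2,2x_j]\);
\item whether \(v_{ij}\le x_j+t\);
\item the corresponding rank information for \(u_{ij}\) relative to \(x_i\) and \(x_j\).
\end{itemize}
These are finitely many colours. By infinite Ramsey we pass to an infinite monochromatic subsequence. Fixing \(i\) and letting \(j\) vary, the difference \(v_{ij}-x_j\) must be constant.

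The reason is as follows. Suppose \(u_{ij}\) lies near \(x_i\) (dyadic scale \(x_i\)), which is forced when its rank data says so. Then \(u_{ij}\) lies in the finite set \(B\cap[x_i/2,2x_i+\cdots]\). So \(\delta_{ij}=x_i+t+1-u_{ij}\) takes finitely many values as \(j\) varies, and we may add \(\delta\) to the colour.

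With boundedness established, a second application of Ramsey makes \(\delta_{ij}=\delta\) constant. Put \(p=t+\delta\) and \(p'=t+1-\delta\), so \(p+p'=2t+1\). For \(x\in X'\), taking \(x=x_j\) with any smaller \(x_i\) in \(X'\) gives \(x+p\in B\). Taking \(x=x_i\) with a larger \(x_j\) gives \(x+p'\in B\). We drop the first element of \(X'\) if needed so that every element has a partner on the required side. The final claim \(\max\{p,p'\}\ge t+1\) is immediate from \(p+p'=2t+1\).

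\textbf{Main obstacle.} The hard step is ruling out the alternative case in which \(u_{ij}\) is not near \(x_i\). Since \(u_{ij}<v_{ij}\), in that case \(u_{ij}\) is of size comparable to \(x_j\): both \(u_{ij}\) and \(v_{ij}\) are within a factor of about 2 of \(x_j\), and both lie near \((x_j+x_i)/2\) or spread around it.

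I plan to handle this by extracting a new descending chain. Fix \(j\) and vary \(i\). All pairs \((u_{ij},v_{ij})\) lie in \(B\cap(x_j/4,2x_j]\), which has at most \(O(M)\) elements. But the sums \(x_i+x_j+2t+1\) are distinct for different \(i\). So at most \(O(M^2)\) values of \(i\) can fall into this case for a given \(j\).

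A monochromatic infinite set in this colour would need infinitely many such \(i<j\), giving a contradiction once \(|X'|\) exceeds \(O(M^2)+1\). Therefore the Ramsey colour must be the "\(u_{ij}\) near \(x_i\)" case, and the argument of Step 3 applies.
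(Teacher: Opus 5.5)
There is a genuine gap in your treatment of $u_{ij}$, and this is where the substance of the lemma lies. In the ``main obstacle'' paragraph you infer from $u_{ij}<v_{ij}$ that a $u_{ij}$ not near $x_i$ must be comparable to $x_j$. But $u_{ij}<v_{ij}$ gives only the upper bound $u_{ij}<(x_i+x_j+2t+1)/2$. It does not exclude $u_{ij}\ll x_i$ (with $v_{ij}\approx x_i+x_j$), nor an intermediate scale $2x_i<u_{ij}<x_j/4$. Already in Step~2 you split into ``small compared with $x_j$'' versus ``near $x_j$''. In Step~3 you then treat the first alternative as ``near $x_i$'', which it is not.

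Your counting argument, using distinct sums inside $B\cap(x_j/4,2x_j]$, disposes only of the case $u_{ij}>x_j/4$. The other two regimes carry no rank data and no finite colour, and they are never ruled out.

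There is also a smaller defect in the ``near $x_i$'' case. There $\delta_{ij}=x_i+t+1-u_{ij}$ can be of order $x_i$. Knowing that it takes finitely many values for each fixed $i$ does not make it a finite colour over all pairs. So ``adding $\delta$ to the colour'' and then applying Ramsey a second time is not justified.

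The missing idea is that a translate of many $x_i$'s cannot sit in $B$ at a large shift. Your first colour component already sets this up; here is how to finish.
\begin{enumerate}
\item After discarding finitely many terms so that $x_{j-1}+2t+1<x_j$, suppose the rank of $v_{ij}$ in $B\cap[x_j/2,2x_j]$ is constant on the monochromatic set $H=\{h_1<h_2<\cdots\}$. Then $v_{ij}=v_j$ depends only on $j$.
\item Hence $u_{ij}=x_i+c_j$ with $c_j=x_j+2t+1-v_j$, for all $i<j$ in $H$.
\item Nonnegativity of $u_{h_1j}$ gives $c_j\ge -x_{h_1}$.
\item If $j>h_{M+2}$ and $c_j\ge x_{h_{M+2}}$, then $x_{h_2}+c_j,\dots,x_{h_{M+2}}+c_j$ are $M+1$ elements of $B\cap(c_j,2c_j]$. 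This contradicts the dyadic bound.
\item So $c_j$ takes finitely many values. Pigeonhole makes $c_j=p$ on an infinite $J\subseteq H$.
\item Then $X'=\{x_j:j\in J,\ j>h_1\}$ works with $p'=2t+1-p$. No colour data on $u_{ij}$ is needed at all.
\end{enumerate}

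Completed this way, your route would be shorter than the paper's. The paper instead:
\begin{itemize}
\item colours $B$ itself with $M+1$ colours so that equal colours are more than a factor $2$ apart;
\item shows that $v_{ij}$ takes at most two values per $j$, with at most one exceptional $i$;
\item applies Ramsey a second time and removes a bad index;
\item bounds $c_j$ via the colour-$U$ inequality $x_b+c_j>2(x_a+c_j)$.
\end{itemize}
As written, however, your proof lacks this bounding step, and the dichotomy you rely on in its place is not valid.
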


\begin{proof}
Color the
positive elements of \(B\) in increasing order using \(M+1\) colors. When an element 
\(y\) is colored,  we forbid colors of elements in \(B\cap[y/2,y)\), so there are at most \(M\) forbidden colors by assumption.
Thus a coloring can be chosen so that positive elements \(x<y\) of the
same color satisfy \(y>2x\).
If \(0\in B\), assign it a color not used for positive elements.

After deleting finitely many terms of \(X\), the sum
\((x_i+t)+(x_j+t)\) is sufficiently large whenever \(i<j\). By
Corollary~\ref{mono:cor:successor-sum}, for each \(i<j\), choose \(u_{ij}<v_{ij}\) in \(B\)
such that \(u_{ij}+v_{ij}=x_i+x_j+2t+1\). Color \(\{i,j\}\) by the ordered pair
of colors of \(u_{ij}\) and \(v_{ij}\), in this order. By Ramsey Theorem~\cite{Ramsey1930}, after passing to an infinite subsequence, we may assume that these colors are fixed, say \(U\) and \(V\).

Let \(C=2t+1\). By (\ref{mono:eq:rapid-growth}), delete finitely many further terms and relabel the
sequence so that, for every \(i<j\),
\(v_{ij}>(x_i+x_j+C)/2>x_j/2\) and
\(v_{ij}\leq x_i+x_j+C\le x_{j-1}+x_j+C<2x_j\).
Thus all \(v_{ij}\), \(i<j\), lie in \((x_j/2,2x_j)\). This interval contains at most two elements of color \(V\): three such elements \(y_1<y_2<y_3\) would satisfy \(y_3>2y_2>4y_1>2x_j\), a contradiction to \(y_3<2x_j\).

For each fixed \(j\), let
$
  \mathcal V_j:=\{v_{ij}:1\le i<j\}.
$
As shown above, \(\mathcal V_j\) contains at most two distinct values.
Let
$
  v_j:=\max \mathcal V_j.
$
If \(\mathcal V_j\setminus\{v_j\}\neq\varnothing\), denote its unique
element by \(w_j\). Thus \(w_j<v_j\). We claim that
\[
  \bigl|\{i<j:v_{ij}=w_j\}\bigr|\le 1.
\]
%Let \(v_j\) be the largest value among the \(v_{ij}\), \(i<j\).
%If there is another value, denote it by \(w_j<v_j\). We claim that
%\(v_{ij}=w_j\) for at most one \(i<j\).

Assume that \(v_{ij}=v_{kj}=w_j\) for some \(i<k<j\). Then \(u_{ij}\ne u_{kj}\). Since $0$ has a color not used for positive
elements, \(u_{ij}\) and \(u_{kj}\) are positive elements of color \(U\), so \(u_{kj}>2u_{ij}\). Since
\(u_{kj}-u_{ij}=x_k-x_i\), we have
\(u_{ij}<x_k-x_i\le x_{j-1}\), and hence
\(w_j=x_i+x_j+C-u_{ij}>x_j+C-x_{j-1}\). Also, \(v_j=v_{hj}\) for some \(h<j\), so
\(v_j\le x_h+x_j+C\le x_{j-1}+x_j+C\). Since \(w_j\) and \(v_j\)
have the same color, \(v_j>2w_j\). Therefore
\(x_j+x_{j-1}+C>2x_j+2C-2x_{j-1}\), and hence
\(x_j+C<3x_{j-1}\), a contradiction to
\eqref{mono:eq:rapid-growth}. Thus \(v_{ij}=v_j\) for all but at
most one \(i<j\).

Fix three indices \(i_1<i_2<i_3\). For every \(j>i_3\), at most one \(r\in\{1,2,3\}\) satisfies \(v_{i_rj}\ne v_j\). Hence at least one of the three pairs $\{i_1,i_2\}$, $\{i_1,i_3\}$ and $\{i_2,i_3\}$ has both of its indices \(i\) satisfying \(v_{ij}=v_j\). By Ramsey Theorem~\cite{Ramsey1930}, there are fixed indices \(a<b\) and an infinite set \(J\) such that
\(
v_{aj}=v_{bj}=v_j
\)
for every \(j\in J\).

For \(j\in J\), let \(c_j=x_j+C-v_j\). Then \(x_a+c_j\) and
\(x_b+c_j\) are distinct positive elements of color \(U\), so
\(x_b+c_j>2(x_a+c_j)\). Thus
\(-x_a<c_j<x_b-2x_a\). After passing to an infinite subset of
\(J\), we may assume that \(c_j=p\) for every \(j\in J\). Let \(p'=C-p\). Then \(x_j+p'=v_j\in B\) for every \(j\in J\).
For each \(j\in J\), at most one \(i<j\) satisfies
\(v_{ij}\ne v_j\). For each \(j\in J\), let
$
  E_j:=\{i<j:v_{ij}\ne v_j\}.
$
We have already shown that \(|E_j|\le 1\). Call an index \(i\in J\) bad if \(i\in E_j\) for every \(j\in J\) with \(j>i\). There is at most one bad index. Indeed, if \(i<k\) were two bad indices, then, choosing \(j\in J\) with \(j>k\), we would have \(i,k\in E_j\), a contradiction to \(|E_j|\le1\). Remove the bad index, if it exists.
Then, for every remaining \(i\in J\), there exists \(j\in J\) with
\(j>i\) and \(v_{ij}=v_j\).
Let \(X'\) be the remaining set
\(\{x_i:i\in J\}\).

For each \(x_i\in X'\), choose \(j\in J\) with \(j>i\) and
\(v_{ij}=v_j\). Then
\(u_{ij}=x_i+x_j+C-v_j=x_i+p\in B\). Also, since \(i\in J\), we have \(c_i=p\), and hence
\(x_i+p'=v_i\in B\). Therefore \(X'+p\subseteq B\) and
\(X'+p'\subseteq B\). Finally, \(p+p'=2t+1\), so
\(\max\{p,p'\}\ge t+1\).
\end{proof}

Iterating the preceding lemma contradicts the dyadic bound.
\begin{proof}[Proof of Theorem~\ref{thm:monotonicity}]
Assume for contradiction that $B=\Nzero\setminus A$ is
infinite. One of the two parity classes of $B$ is infinite.
Choose from it a sequence
\(
 X_0=\{x_1<x_2<\cdots\}
\)
such that $x_{j-1}/x_j\to0$. Let $t_0=0$. Then
$X_0+t_0\subseteq B$.

We inductively construct nested infinite subsequences
\(
 X_0\supseteq X_1\supseteq\cdots\supseteq X_{M+1}
\)
and integers
\(
 0=t_0<t_1<\cdots<t_{M+1}
\)
such that
\begin{equation}\label{mono:eq:all-translates}
 X_k+t_i\subseteq B
 \qquad(0\leq i\leq k).
\end{equation}
Suppose $X_k$ and $t_0,\ldots,t_k$ have been constructed. By Lemma~\ref{mono:lem:two-translates}, there exist an infinite subsequence $X_{k+1}\subseteq X_k$ and integers $p,p'$ such that $p+p'=2t_k+1$, $X_{k+1}+p\subseteq B$, and $X_{k+1}+p'\subseteq B$. Setting $t_{k+1}=\max\{p,p'\}$, we have $t_{k+1}\geq t_k+1$. Since $X_{k+1}\subseteq X_k$, all earlier containments in \eqref{mono:eq:all-translates} remain valid.

Choose $x\in X_{M+1}$ such that $x>t_{M+1}$. By
\eqref{mono:eq:all-translates},
\(
 x+t_1,\ldots,x+t_{M+1}\in B\cap(x,2x].
\)
These are $M+1$ distinct elements of \(B\cap(x,2x]\). By
Lemma~\ref{mono:lem:dyadic}, this set has at most \(M\) elements, a
contradiction. Therefore $B$ is finite.
\end{proof}

\section{Proof of Theorem \ref{thm:density}}\label{sec:density}

In this section, we prove Theorem~\ref{thm:density}. All digit expansions in this section are of base $4$. For every fixed 
\(x\in\mathbb N_0\), write
\[
 x=\sum_{j\ge0}d_j(x)4^j,
 \qquad d_j(x)\in\{0,1,2,3\},
\]
where all but finitely many digits vanish. For \(i\ge1\) and \(\sigma\in\{0,1\}\), let
\[
 U_i^\sigma=\left\{j\ge0:\left\lfloor\frac{j}{2^{i-1}}\right\rfloor\equiv\sigma\pmod 2\right\}.
\]
Thus \(U_i^0\) and \(U_i^1\) partition $\mathbb N_0$, and both sets are periodic with period \(2^i\). For \(X\subseteq\mathbb N_0\), define
\[
 \mathcal B(X)=\{x\in\mathbb N_0:d_j(x)=0\text{ for every }j\notin X\}.
\]
Let $L_i:=16\cdot4^i$ and $T_i:=4^{L_i}$.
We call the labels \((i,\sigma)\) and \((k,\tau)\) \emph{complementary} if \(i=k\) and \(\sigma=1-\tau\).
For an integer \(m\ge1\), define \([0,m):=\{0,1,\ldots,m-1\}\).
The following lemma shows that two noncomplementary sets \(U_i^\sigma\) and \(U_k^\tau\) leave a positive proportion of the indices uncovered.
\begin{lemma}\label{dens:lem:missing}
Suppose that \(L_i,L_k<m\), and that
\((i,\sigma)\) and \((k,\tau)\) are not complementary. Then
\[
  \abs{[0,m)\setminus\bigl(U_i^\sigma\cup U_k^\tau\bigr)}
  \ge \frac{m}{8}.
\]
\end{lemma}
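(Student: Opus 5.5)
The plan is to identify the uncovered set $[0,m)\setminus(U_i^\sigma\cup U_k^\tau)$ explicitly as a periodic set and to count its elements in $[0,m)$ by complete periods. The hypothesis $L_i,L_k<m$ will only be used to make the incomplete final period negligible.

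First I would rewrite the membership condition as a binary-digit condition. By definition, $j\in U_i^\sigma$ if and only if bit $i-1$ of the binary expansion of $j$ equals $\sigma$. Hence the complement of $U_i^\sigma$ is $U_i^{1-\sigma}$, and
\[
[0,m)\setminus\bigl(U_i^\sigma\cup U_k^\tau\bigr)=[0,m)\cap U_i^{1-\sigma}\cap U_k^{1-\tau}.
\]
Since the labels are not complementary, there are two cases.

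\emph{Case $i=k$.} Here $\sigma=\tau$, and the uncovered set is $U_i^{1-\sigma}\cap[0,m)$. This set is periodic with period $2^i$ and contains exactly $2^{i-1}$ elements of each complete period. The interval $[0,m)$ contains $\lfloor m/2^i\rfloor\ge m/2^i-1$ complete periods. The count is therefore at least
\[
\Bigl(\frac{m}{2^i}-1\Bigr)2^{i-1}=\frac m2-2^{i-1}.
\]
Since $m>L_i=16\cdot4^i\ge 2^{i+1}$, we have $2^{i-1}\le m/4$. So the count is at least $m/4\ge m/8$.

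\emph{Case $i\ne k$.} By symmetry assume $i<k$. A position $j$ is uncovered exactly when bit $i-1$ of $j$ equals $1-\sigma$ and bit $k-1$ of $j$ equals $1-\tau$. These are two distinct bit positions, both below $k$. Among the $2^k$ residues modulo $2^k$, exactly $2^{k-2}$ satisfy both conditions, because two of the $k$ lowest bits are fixed and the rest are free. Counting complete periods of length $2^k$ in $[0,m)$ gives at least
\[
\Bigl(\frac{m}{2^k}-1\Bigr)2^{k-2}=\frac m4-2^{k-2}
\]
uncovered positions. Since $m>L_k=16\cdot4^k\ge 2^{k+1}$, we have $2^{k-2}\le m/8$. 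So the count is at least $m/8$, which proves the lemma.

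There is no real obstacle here; the only points needing care are the following. The bit conditions in the second case must concern two distinct bits, which is exactly where $i\ne k$ is used. The threshold $L_i,L_k<m$ must be large enough to absorb the incomplete period; it is far larger than needed.
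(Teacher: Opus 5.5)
Your proof is correct and follows the paper's argument. You split into the case $i=k$ (which forces $\sigma=\tau$) and the case $i<k$, count the uncovered positions in complete periods of length $2^i$ (one half) or $2^k$ (one quarter), and absorb the incomplete final period using $m>L_k$; your binary-bit description is just a cleaner phrasing of the paper's observation that $U_k^\tau$ is constant on blocks of length $2^{k-1}$ while $U_i^\sigma$ has period $2^i\mid 2^{k-1}$.
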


\begin{proof}
Suppose first that \(i\ne k\). By symmetry, assume that \(i<k\). The set \(U_k^\tau\) has period \(2^k\) and is constant on each block of length \(2^{k-1}\). The set \(U_i^\sigma\) has period \(2^i\), and \(2^i\mid 2^{k-1}\). Hence, in each period of length \(2^k\), exactly one quarter of the positions belong to neither \(U_i^\sigma\) nor \(U_k^\tau\). Write \(m=q2^k+r\), where \(0\le r<2^k\). Each complete period contains exactly \(2^{k-2}\) positions belonging to neither \(U_i^\sigma\) nor \(U_k^\tau\). Therefore
\(
  \abs{[0,m)\setminus\bigl(U_i^\sigma\cup U_k^\tau\bigr)}
  \ge q2^{k-2}
  =\frac{m-r}{4}
  \ge\frac{m}{4}-2^{k-2}>\frac{m}{4}-2^{k}.
\)
Since \(L_k<m\), we have
\(
  m>16\cdot 4^k=16\cdot 2^{2k}\ge 32\cdot 2^k.
\)
Thus \(2^k<m/32\), and hence
\(
  \frac{m}{4}-2^k
  >
  \frac{m}{4}-\frac{m}{32}
  >
  \frac{m}{8}.
\)

Now suppose that \(i=k\). Since the labels are not complementary, we must have \(\sigma=\tau\). In each period of length \(2^i\), exactly half of the positions lie outside \(U_i^\sigma\). Thus,
\(
  \abs{[0,m)\setminus U_i^\sigma}
  \ge \frac{m}{2}-2^i>
  \frac{m}{2}-\frac{m}{32}
  >
  \frac{m}{8}.
\)
This completes the proof.
\end{proof}
The next lemma bounds the number of possible sums in terms of the number of uncovered digit indices.
\begin{lemma}\label{dens:lem:digit-sums}
Let \(X,Y\subseteq\mathbb N_0\), and suppose that at least \(g\) integers in \([0,m)\) belong to neither \(X\) nor \(Y\). Then
\[
 \abs{(\mathcal B(X)+\mathcal B(Y))\cap[0,4^m)}\le 4^m2^{-g}.
\]
\end{lemma}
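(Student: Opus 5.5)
The plan is to build an injection from the sums below $4^m$ into digit patterns, and to show that each uncovered position halves the number of attainable patterns. Let $G\subseteq[0,m)$ be a set of exactly $g$ positions lying in neither $X$ nor $Y$.

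Take $s=a+b$ with $a\in\mathcal B(X)$, $b\in\mathcal B(Y)$ and $s<4^m$. Then $a,b<4^m$, so only the digits in positions $[0,m)$ matter. We carry out the base-$4$ addition with carries $c_j\in\{0,1\}$, where $c_0=0$. Since $a+b<4^m$, the final carry $c_m$ is $0$. For $j\in G$ we have $d_j(a)=d_j(b)=0$, so
\[
 d_j(s)=c_j\in\{0,1\}, \qquad c_{j+1}=0 .
\]
So at each position of $G$ the digit of $s$ takes only $2$ of the $4$ possible values, and the carry leaving that position is $0$. (The carry-out fact is not needed for the count, but it confirms the digit equals the incoming carry.)

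Now count. The digit string $(d_0(s),\dots,d_{m-1}(s))$ determines $s$, and $s\mapsto$ digit string is injective on $[0,4^m)$. The positions in $G$ have at most $2$ choices each, and the remaining $m-g$ positions have at most $4$ choices each. Therefore
\[
 \abs{(\mathcal B(X)+\mathcal B(Y))\cap[0,4^m)}\le 2^g4^{m-g}=4^m2^{-g}.
\]

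There is no real obstacle. The only point needing care is the justification that $d_j(s)=c_j\le1$ at uncovered positions. This follows from the standard fact that the carry in base-$4$ addition of two numbers never exceeds $1$: the maximum column sum is $3+3+1=7<8$, so the outgoing carry is at most $1$.
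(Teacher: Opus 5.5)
Your proposal is correct and follows essentially the same argument as the paper. Both use base-$4$ carries $c_j\in\{0,1\}$ to show $d_j(s)=c_j\in\{0,1\}$ at each uncovered position, then count digit strings to get $2^g4^{m-g}=4^m2^{-g}$.
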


\begin{proof}
Let \(n=a+b<4^m\), where \(a\in\mathcal B(X)\) and \(b\in\mathcal B(Y)\). Then \(a,b<4^m\). Since \(n=a+b\), there are integers \(c_j\geq0\), with \(c_0=0\), such that
\(d_j(a)+d_j(b)+c_j=d_j(n)+4c_{j+1}\) for every \(j\geq0\).
Since \(d_j(n)\in\{0,1,2,3\}\), it follows that
\[
  c_{j+1}
  =
  \left\lfloor
  \frac{d_j(a)+d_j(b)+c_j}{4}
  \right\rfloor.
\]
Since \(d_j(a)+d_j(b)+c_j\le7\), by induction, we have \(c_j\in\{0,1\}\) for every \(j\). If \(j\notin X\cup Y\), then \(d_j(a)=d_j(b)=0\). Hence
\(
  c_j=d_j(n)+4c_{j+1}.
\)
Since \(c_j\in\{0,1\}\), we must have \(c_{j+1}=0\) and
\(d_j(n)=c_j\in\{0,1\}\). Thus, at each of the \(g\) integers
outside \(X\cup Y\), the digit \(d_j(n)\) has at most two possible values. At each of the remaining \(m-g\) positions, it has at most four possible values. Since every integer \(n<4^m\) is uniquely determined by the digits \(d_0(n),\ldots,d_{m-1}(n)\), the number of possible sums is at most
\(
  2^g4^{m-g}=4^m2^{-g}.
\)
\end{proof}

Define the digit projection
\[
 \pi_i^\sigma(n)=\sum_{j\in U_i^\sigma}d_j(n)4^j.
\]
Since \(U_i^0\) and \(U_i^1\) partition the digit positions, we have
\(
  n=\pi_i^0(n)+\pi_i^1(n).
\)

\begin{lemma}\label{dens:lem:small-projection}
If \(2L_i<m\), then, for each \(\sigma\in\{0,1\}\),
\(
  \abs{\{0\le n<4^m:\pi_i^\sigma(n)<4^{L_i}\}}
  \le 4^{7m/8}.
\)
\end{lemma}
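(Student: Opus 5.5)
If \(\pi_i^\sigma(n)<4^{L_i}\), then every digit of \(n\) in a position \(j\in U_i^\sigma\) with \(j\ge L_i\) must vanish. The plan is to count the \(n<4^m\) obeying these forced zeros and then bound how many positions are forced.

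\textbf{Step 1: counting.} Let \(S=U_i^\sigma\cap[L_i,m)\). The digits \(d_j(n)\) for \(j\in[0,m)\setminus S\) are free, with at most four choices each, and the digits in \(S\) are all zero. The number of such \(n<4^m\) is therefore at most \(4^{m-|S|}\). So it suffices to prove \(|S|\ge m/8\).

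\textbf{Step 2: density of \(S\).} The interval \([L_i,m)\) has length \(m-L_i>m/2\), because \(2L_i<m\). The set \(U_i^\sigma\) has period \(2^i\) and contains exactly \(2^{i-1}\) positions in each period. As in the proof of Lemma~\ref{dens:lem:missing}, an interval of length \(\ell\) contains at least \(\ell/2-2^{i-1}\) elements of \(U_i^\sigma\). This gives
\[
|S|\ \ge\ \frac{m-L_i}{2}-2^{i}\ >\ \frac{m}{4}-2^i .
\]

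\textbf{Step 3: absorbing the error.} Since \(m>2L_i=32\cdot4^i\ge 64\cdot 2^i\), we have \(2^i<m/64\). Hence
\[
|S|>\frac m4-\frac m{64}>\frac m8,
\]
and the count from Step 1 is at most \(4^{m-m/8}=4^{7m/8}\).

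The only point needing care is the boundary loss in Step 2. Everything else is direct. The bound \(2L_i<m\) ensures that the window \([L_i,m)\) has length at least \(m/2\). The choice \(L_i=16\cdot4^i\) makes the period \(2^i\) negligible relative to \(m\).
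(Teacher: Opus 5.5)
Your proof is correct and follows essentially the same route as the paper. You force the digits of \(n\) in \(U_i^\sigma\cap[L_i,m)\) to vanish, bound this set below by \(\frac{m-L_i}{2}-2^i>\frac m4-2^i\) using the periodicity of \(U_i^\sigma\), and then use \(2^i<m/64\) (from \(m>2L_i\)) to obtain more than \(m/8\) forced zero digits, which gives the bound \(4^{7m/8}\).
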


\begin{proof}
Since \(\pi_i^\sigma(n)=\sum_{j\in U_i^\sigma}d_j(n)4^j<4^{L_i}\) and all terms in the sum are nonnegative, we must have
\(d_j(n)=0\) for every \(j\in U_i^\sigma\cap[L_i,m)\). The set \(U_i^\sigma\) has period \(2^i\), and each complete period
contains exactly \(2^{i-1}\) elements of \(U_i^\sigma\). Remove
fewer than \(2^i\) positions from each end of \([L_i,m)\) so that
the remaining positions form complete periods. It follows that
\[
  \abs{U_i^\sigma\cap[L_i,m)}
  \ge \frac{m-L_i}{2}-2^i>\frac{m}{4}-2^i.
\]
Also,
\(m>2L_i=32\cdot4^i=32\cdot2^{2i}\ge64\cdot2^i\), so
\(2^i<m/64\). Hence
\[
  \abs{U_i^\sigma\cap[L_i,m)}
  >\frac m4-\frac m{64}>\frac m8.
\]

Hence at least \(m/8\) of the digits \(d_j(n)\) with $j\in U_i^\sigma\cap[L_i,m)$ are zero. The other digits have at most four choices each, so the number of possible integers \(n\) is at most
\(
4^{m-m/8}=4^{7m/8}.
\)
\end{proof}

For \(i\geq1\) and \(\sigma\in\{0,1\}\), let
\(
D_i^\sigma=\mathcal B(U_i^\sigma)\cap[T_i,\infty).
\)
Let \(E_{\mathrm{nc}}\) be the union of the sets
\(D_i^\sigma+D_k^\tau\) over all noncomplementary pairs
\((i,\sigma)\) and \((k,\tau)\).

\begin{proposition}\label{dens:prop:enc}
For \(m\ge1\),
\(
  \abs{E_{\mathrm{nc}}\cap[0,4^m)}
  =O\bigl((\log m)^2 4^m2^{-m/8}\bigr).
\)
\end{proposition}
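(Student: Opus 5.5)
The plan is a union bound over the noncomplementary label pairs that can actually contribute below $4^m$, with each pair handled by Lemmas~\ref{dens:lem:missing} and~\ref{dens:lem:digit-sums}.

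First, identify the relevant labels. If $n\in D_i^\sigma+D_k^\tau$ and $n<4^m$, then both summands lie in $[0,4^m)$. Since $D_i^\sigma\subseteq[T_i,\infty)$ with $T_i=4^{L_i}$, this forces $4^{L_i}\le n<4^m$, so $L_i<m$. Likewise $L_k<m$. Because $L_i=16\cdot4^i$, the condition $L_i<m$ gives $i<\log_4(m/16)$. Hence the number of admissible $i$ is $O(\log m)$, and there are only $O(\log m)$ choices for each label $(i,\sigma)$. So at most $O((\log m)^2)$ noncomplementary pairs contribute to $E_{\mathrm{nc}}\cap[0,4^m)$. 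All other pairs give nothing in this range, because their sums are at least $4^m$.

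Second, bound each contributing pair. Fix a noncomplementary pair with $L_i,L_k<m$. By Lemma~\ref{dens:lem:missing}, at least $m/8$ integers in $[0,m)$ lie outside $U_i^\sigma\cup U_k^\tau$. Set $g=\lceil m/8\rceil$. Since $D_i^\sigma\subseteq\mathcal B(U_i^\sigma)$ and $D_k^\tau\subseteq\mathcal B(U_k^\tau)$, Lemma~\ref{dens:lem:digit-sums} gives
\[
\abs{(D_i^\sigma+D_k^\tau)\cap[0,4^m)}\le 4^m2^{-g}\le 4^m2^{-m/8}.
\]

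Third, sum over the $O((\log m)^2)$ pairs to get $\abs{E_{\mathrm{nc}}\cap[0,4^m)}=O((\log m)^2 4^m 2^{-m/8})$.

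For small $m$, when no label satisfies $L_i<m$ (for instance $m\le 64$), the set is empty and the bound holds trivially. Interpret $\log m$ as $\max\{\log m,1\}$ if needed.

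No step is a real obstacle. The only point needing care is the reduction to $L_i,L_k<m$, which uses the threshold $T_i$ together with the fact that summands are nonnegative.
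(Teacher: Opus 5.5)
Your proposal is correct and follows the paper's proof essentially step for step. Like the paper, you restrict to labels with $L_i,L_k<m$, which leaves $O((\log m)^2)$ pairs, bound each noncomplementary pair via Lemma~\ref{dens:lem:missing} combined with Lemma~\ref{dens:lem:digit-sums}, and sum over the pairs; your extra care with $g=\lceil m/8\rceil$ and the empty range $m\le 64$ is harmless and slightly tidier than the paper's write-up.
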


\begin{proof}
Suppose that \(n=a+b<4^m\), where
\(a\in D_i^\sigma\) and \(b\in D_k^\tau\). Since
\(T_i\le a<n<4^m\) and \(T_k\le b<n<4^m\), we have
\(L_i,L_k<m\). Since \(L_i=16\cdot4^i\), there are only \(O(\log m)\) possible indices \(i\), and hence only \(O(\log m)\) relevant labels \((i,\sigma)\). Thus there are \(O((\log m)^2)\) relevant pairs of labels.

Fix a noncomplementary pair \((i,\sigma)\), \((k,\tau)\).
By Lemma~\ref{dens:lem:missing}, at least \(m/8\) integers in
\([0,m)\) belong to neither \(U_i^\sigma\) nor \(U_k^\tau\).
Since \(D_i^\sigma\subseteq\mathcal B(U_i^\sigma)\) and
\(D_k^\tau\subseteq\mathcal B(U_k^\tau)\), by
Lemma~\ref{dens:lem:digit-sums},
\(
  \abs{(D_i^\sigma+D_k^\tau)\cap[0,4^m)}
  \le 4^m2^{-m/8}.
\)
Summing this estimate over the relevant pairs proves the result.
\end{proof}

%Let \(E_{\mathrm{pr}}\) be the set of integers \(n\) for which \(T_i^2\le n\) and \(\min\{\pi_i^0(n),\pi_i^1(n)\}<T_i\) for some \(i\ge1\).
Let
\[
  E_{\mathrm{pr}}
  :=
  \left\{
    n\in\mathbb N_0:
    \text{for some }i\ge1,\ 
    T_i^2\le n
    \text{ and }
    \min\{\pi_i^0(n),\pi_i^1(n)\}<T_i
  \right\}.
\]

\begin{proposition}\label{dens:prop:epr}
For \(m\ge1\),
\(
  \abs{E_{\mathrm{pr}}\cap[0,4^m)}
  =O\bigl((\log m)4^{7m/8}\bigr).
\)
\end{proposition}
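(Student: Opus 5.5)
The plan is a union bound over the relevant indices \(i\), applying Lemma~\ref{dens:lem:small-projection} to each index and each \(\sigma\in\{0,1\}\).

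\emph{Reduction to a single index.} Let \(n\in E_{\mathrm{pr}}\cap[0,4^m)\). Then there is an index \(i\ge1\) with \(T_i^2\le n\) and \(\pi_i^\sigma(n)<T_i=4^{L_i}\) for some \(\sigma\in\{0,1\}\). Since \(T_i^2=4^{2L_i}\le n<4^m\), we get \(2L_i<m\), which is exactly the hypothesis of Lemma~\ref{dens:lem:small-projection}. Consequently
\[
E_{\mathrm{pr}}\cap[0,4^m)\subseteq\bigcup_{i:\,2L_i<m}\ \bigcup_{\sigma\in\{0,1\}}\{0\le n<4^m:\pi_i^\sigma(n)<4^{L_i}\}.
\]

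\emph{Counting.} Lemma~\ref{dens:lem:small-projection} bounds each set in this union by \(4^{7m/8}\). It remains to count the indices \(i\) that occur. The condition \(2L_i=32\cdot 4^i<m\) forces \(i<\log_4 m\). So there are \(O(\log m)\) such indices, and at most \(2\) choices of \(\sigma\) for each. Summing gives the bound \(O((\log m)\,4^{7m/8})\).

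\emph{Small \(m\).} For small \(m\), possibly no index qualifies. In that case the intersection is empty: if \(n\in E_{\mathrm{pr}}\) and \(n<4^m\), some index must satisfy \(2L_i<m\). The bound therefore holds trivially.

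There is no real obstacle here. The only point needing care is checking that \(n\ge T_i^2\) together with \(n<4^m\) gives the strict inequality \(2L_i<m\) required by Lemma~\ref{dens:lem:small-projection}. This follows from \(4^{2L_i}\le n<4^m\).
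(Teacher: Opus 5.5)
Your proposal is correct and follows essentially the same argument as the paper. Both deduce \(2L_i<m\) from \(4^{2L_i}=T_i^2\le n<4^m\), apply Lemma~\ref{dens:lem:small-projection} to each of the \(O(\log m)\) relevant labels \((i,\sigma)\), and sum the resulting bounds.
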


\begin{proof}
Let $n \in E_{pr}$.
Since \(T_i^2\le n<4^m\), we have
\(4^{2L_i}=T_i^2<4^m\), and hence \(2L_i<m\).
Since \(L_i=16\cdot4^i\), there are only \(O(\log m)\) relevant
labels \((i,\sigma)\). For each such label, by Lemma~\ref{dens:lem:small-projection},
at most \(4^{7m/8}\) integers \(n<4^m\) satisfying
\(\pi_i^\sigma(n)<T_i\). Summing over the relevant labels proves
Proposition \ref{dens:prop:epr}.
\end{proof}

Let \(E_0=E_{\mathrm{nc}}\cup E_{\mathrm{pr}}\), which will be the exceptional set in Theorem \ref{thm:density}. 

\begin{corollary}\label{dens:cor:exceptional}
For every \(x\ge3\),
\(
 \abs{E_0\cap[1,x]}=O\bigl(x^{15/16}(\log\log x)^2\bigr).
\)
In particular, \(E_0\) has density zero.
\end{corollary}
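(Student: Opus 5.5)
We combine Propositions~\ref{dens:prop:enc} and~\ref{dens:prop:epr}, which bound \(E_{\mathrm{nc}}\) and \(E_{\mathrm{pr}}\) on ranges \([0,4^m)\). Given \(x\ge3\), we choose the least integer \(m\ge1\) with \(x<4^m\). Then \(4^{m-1}\le x\) (for \(m\ge2\); small \(x\) are absorbed into the constant), so \(4^m\le 4x\) and \(m\le \log_4 x+1\). Since \(E_0=E_{\mathrm{nc}}\cup E_{\mathrm{pr}}\), we get
\[
  \abs{E_0\cap[1,x]}\le \abs{E_{\mathrm{nc}}\cap[0,4^m)}+\abs{E_{\mathrm{pr}}\cap[0,4^m)}
  =O\bigl((\log m)^2 4^m2^{-m/8}\bigr)+O\bigl((\log m)4^{7m/8}\bigr).
\]

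Next we rewrite each term as a power of \(4^m\). We have \(2^{-m/8}=4^{-m/16}\), so the first term is \(O((\log m)^2(4^m)^{15/16})\). The second term is \(O((\log m)(4^m)^{7/8})\), which is smaller since \(7/8<15/16\). Using \(4^m\le 4x\), we get \((4^m)^{15/16}=O(x^{15/16})\). Moreover \(m=O(\log x)\), so \(\log m=O(\log\log x)\) for \(x\ge3\). One must ensure \(\log\log x\) is bounded below by a positive constant; if it is not, we handle small \(x\) by enlarging the constant, or use \(\log m\le \log(\log_4 x+1)=O(\log\log x)\) for \(x\) large. Combining these gives \(\abs{E_0\cap[1,x]}=O\bigl(x^{15/16}(\log\log x)^2\bigr)\).

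Finally, density zero follows because \(x^{15/16}(\log\log x)^2/x=x^{-1/16}(\log\log x)^2\to0\).

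There is no real obstacle here; the only care needed is the bookkeeping when passing from the dyadic-type range \([0,4^m)\) to arbitrary \(x\). This uses monotonicity of \(\abs{E_0\cap[1,x]}\) in \(x\) and the fact that \(4^m\) exceeds \(x\) by at most a factor of \(4\). It also requires handling the finitely many small values of \(x\) where \(\log\log x\) is tiny, which only affects the implied constant.
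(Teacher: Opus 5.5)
Your proposal is correct and follows the paper's proof: choose $m$ with $4^{m-1}\le x<4^m$, bound $\abs{E_0\cap[1,x]}$ by the two propositions' counts on $[0,4^m)$, rewrite $2^{-m/8}=4^{-m/16}$, and then use $4^m\le 4x$ together with $\log m=O(\log\log x)$. Your extra caution about small $x$ is harmless but not needed, since $4^{m-1}\le x$ already holds for every $x\ge1$, and $\log\log x\ge\log\log 3>0$ on the stated range.
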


\begin{proof}
Choose \(m\) so that \(4^{m-1}\le x<4^m\). By Propositions~\ref{dens:prop:enc} and~\ref{dens:prop:epr}, we have
\[
 \abs{E_0\cap[1,x]}
 \le O\bigl((\log m)^2(4^m)^{15/16}
 +(\log m)(4^m)^{7/8}\bigr).
\]
Since \(4^m\le4x\) and \(\log m=O(\log\log x)\), this completes the proof.
\end{proof}

Define
\[
  A=\bigcup_{i\ge1}\bigl(D_i^0\cup D_i^1\bigr).
\]
For \(a\in A\), let
\(
 \Lambda(a)=\{(i,\sigma):a\in D_i^\sigma\}
\)
be its set of labels. This set is finite because \(T_i\to\infty\). The sets \(D_i^\sigma\) need not be disjoint, so we need the following observation.
Let $
  J(n)=\{i\ge1:\pi_i^0(n)\ge T_i
  \text{ and }\pi_i^1(n)\ge T_i\}.
$

\begin{lemma}\label{dens:lem:rigidity}
Let \(n\notin E_{\mathrm{nc}}\). Every ordered representation
\(n=a+b\), with \(a,b\in A\), is of the form
\(
(a,b)=\bigl(\pi_i^\sigma(n),\pi_i^{1-\sigma}(n)\bigr)
\)
for some \(i\in J(n)\) and \(\sigma\in\{0,1\}\). Moreover, the
ordered pairs
$\bigl(\pi_i^\sigma(n),\pi_i^{1-\sigma}(n)\bigr)$,
$i\in J(n)$, $\sigma\in\{0,1\}$,
are pairwise distinct. Thus,
\(
r_1(A,n)=2|J(n)|.
\)
\end{lemma}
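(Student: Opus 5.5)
The proof has three parts: pin down the labels of any representation, identify the summands via the digit-carry argument of Lemma~\ref{dens:lem:digit-sums}, and then check that the resulting ordered pairs are distinct.

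\emph{Forcing complementary labels.} Let \(n=a+b\) with \(a,b\in A\). Pick labels \((i,\sigma)\in\Lambda(a)\) and \((k,\tau)\in\Lambda(b)\), so that \(a\in D_i^\sigma\) and \(b\in D_k^\tau\). If these two labels were noncomplementary, then \(n\in D_i^\sigma+D_k^\tau\subseteq E_{\mathrm{nc}}\), which is excluded. Hence every label of \(a\) is complementary to every label of \(b\). In particular, \(k=i\) and \(\tau=1-\sigma\), so \(a\in\mathcal B(U_i^\sigma)\) and \(b\in\mathcal B(U_i^{1-\sigma})\).

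\emph{No carries.} Because \(U_i^\sigma\) and \(U_i^{1-\sigma}\) are disjoint, at each digit position at most one of \(d_j(a)\) and \(d_j(b)\) is nonzero. Run the carry recursion from the proof of Lemma~\ref{dens:lem:digit-sums} starting with \(c_0=0\). Inductively, if \(c_j=0\), then \(d_j(a)+d_j(b)\le3\), so \(c_{j+1}=0\). Therefore there are no carries at all, and \(d_j(n)=d_j(a)+d_j(b)\) for every \(j\). It follows that \(a=\pi_i^\sigma(n)\) and \(b=\pi_i^{1-\sigma}(n)\). Since \(a\ge T_i\) and \(b\ge T_i\), we get \(i\in J(n)\). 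Conversely, for any \(i\in J(n)\) and any \(\sigma\), the projections \(\pi_i^\sigma(n)\) and \(\pi_i^{1-\sigma}(n)\) lie in \(D_i^\sigma\) and \(D_i^{1-\sigma}\) respectively, and they sum to \(n\). So each such pair really is an ordered representation.

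\emph{Distinctness, the main step.} Suppose \(\bigl(\pi_i^\sigma(n),\pi_i^{1-\sigma}(n)\bigr)=\bigl(\pi_k^\tau(n),\pi_k^{1-\tau}(n)\bigr)\) with \((i,\sigma)\neq(k,\tau)\). Then \(a:=\pi_i^\sigma(n)\) has both labels \((i,\sigma)\) and \((k,\tau)\). Indeed, \(a\in\mathcal B(U_k^\tau)\) and \(a\ge T_k\) because \(k\in J(n)\). By the first step, every label of \(a\) must be complementary to every label of \(b\). The labels of \(b\) include \((i,1-\sigma)\) and \((k,1-\tau)\). Then \((k,\tau)\) must be complementary to \((i,1-\sigma)\), which forces \(k=i\) and \(\tau=\sigma\), contradicting \((i,\sigma)\neq(k,\tau)\). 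Hence the \(2|J(n)|\) ordered pairs are pairwise distinct, and they exhaust all representations, giving \(r_1(A,n)=2|J(n)|\).

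The step needing care is this distinctness argument. Its essential ingredient is that the first step yields the strong conclusion that \emph{all} labels of \(a\) and \(b\) are mutually complementary, not merely that some pair of labels is. That conclusion holds because \(D_i^\sigma+D_k^\tau\subseteq E_{\mathrm{nc}}\) for every noncomplementary pair whatsoever.
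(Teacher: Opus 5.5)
Your proof is correct and follows essentially the same route as the paper. Both arguments force complementary labels from $n\notin E_{\mathrm{nc}}$, identify the summands as $\pi_i^\sigma(n)$ and $\pi_i^{1-\sigma}(n)$, and obtain distinctness by appealing to $E_{\mathrm{nc}}$ again. The one difference is the subcase $k=i$, $\tau=1-\sigma$: you handle it through $E_{\mathrm{nc}}$ as well, since a single label pair gives both $k=i$ and $\tau=\sigma$. The paper instead treats it separately, observing that $\pi_i^0(n)=\pi_i^1(n)$ would force both projections to vanish, which contradicts $i\in J(n)$.
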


\begin{proof}
Choose labels \((i,\sigma)\in\Lambda(a)\) and
\((k,\tau)\in\Lambda(b)\). Since \(n\notin E_{\mathrm{nc}}\), these
labels are complementary. Hence \(i=k\) and \(\tau=1-\sigma\). Since
the nonzero digits of \(a\) and \(b\) occur in the disjoint sets
\(U_i^\sigma\) and \(U_i^{1-\sigma}\), respectively, we have
\(a=\pi_i^\sigma(n)\) and \(b=\pi_i^{1-\sigma}(n)\). Moreover,
\(a,b\ge T_i\), so \(i\in J(n)\). Conversely, if \(i\in J(n)\), then
\(\pi_i^\sigma(n)\in D_i^\sigma\) for each
\(\sigma\in\{0,1\}\). Hence
\(\bigl(\pi_i^\sigma(n),\pi_i^{1-\sigma}(n)\bigr)\) is an ordered
representation of \(n\) by elements of \(A\).

Suppose that \(i,k\in J(n)\) and
\(\bigl(\pi_i^\sigma(n),\pi_i^{1-\sigma}(n)\bigr)
=\bigl(\pi_k^\tau(n),\pi_k^{1-\tau}(n)\bigr)=(a,b)\).
If \(i\ne k\), then \(a\in D_i^\sigma\) and
\(b\in D_k^{1-\tau}\), so \(n\in E_{\mathrm{nc}}\), a contradiction.
Thus \(i=k\). If \(\sigma\ne\tau\), then
\(\pi_i^\sigma(n)=\pi_i^{1-\sigma}(n)\). Since
\(U_i^0\cap U_i^1=\varnothing\), both projections must be zero,
a contradiction to \(i\in J(n)\). Hence \(\sigma=\tau\). Therefore \(r_1(A,n)=2|J(n)|\).
\end{proof}

Let
$K(n)=\abs{\{i\ge1:T_i^2\le n\}}$,
and
$f(n)=2K(n)+1$.

\begin{proposition}\label{dens:prop:count}
For every \(n\notin E_0\), there exists \(b(n)\in\{0,1\}\) such that
\(
  r_1(A,n)=2K(n)+2b(n).
\)
Thus,
\(
  \abs{r_1(A,n)-f(n)}=1.
\)
\end{proposition}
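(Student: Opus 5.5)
By Lemma~\ref{dens:lem:rigidity}, for \(n\notin E_0\subseteq\) complement of \(E_{\mathrm{nc}}\) we already have \(r_1(A,n)=2|J(n)|\). So the plan is to show that \(|J(n)|\in\{K(n),K(n)+1\}\) whenever \(n\notin E_0\). I would split the indices \(i\ge1\) into three classes according to the size of \(n\): (a) \(T_i^2\le n\); (b) \(2T_i\le n<T_i^2\); (c) \(n<2T_i\).

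\emph{Class (a).} Take \(i\) with \(T_i^2\le n\). Since \(n\notin E_{\mathrm{pr}}\), we have \(\min\{\pi_i^0(n),\pi_i^1(n)\}\ge T_i\), so \(i\in J(n)\). Thus the \(K(n)\) indices in class (a) all lie in \(J(n)\).

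\emph{Class (c).} Take \(i\) with \(n<2T_i\). Then \(\pi_i^0(n)+\pi_i^1(n)=n<2T_i\), so both projections cannot be at least \(T_i\). Hence \(i\notin J(n)\).

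\emph{Class (b).} It remains to show that at most one index satisfies \(2T_i\le n<T_i^2\). This is the only step needing a computation, namely that the intervals \([2T_i,T_i^2)\) are pairwise disjoint. Since \(T_{i+1}=4^{16\cdot 4^{i+1}}=4^{4L_i}=T_i^4\), we get \(2T_{i+1}>T_i^4\ge T_i^2\). The thresholds increase, so for \(i<k\) we have \(T_i^2\le T_{k-1}^2<2T_k\), which gives disjointness. Consequently at most one \(i\) lies in class (b), and it contributes \(0\) or \(1\) to \(|J(n)|\).

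Combining the three classes, \(|J(n)|=K(n)+b(n)\) with \(b(n)\in\{0,1\}\). Therefore \(r_1(A,n)=2K(n)+2b(n)\), and so \(r_1(A,n)-f(n)=2b(n)-1=\pm1\). I expect no real obstacle here: the work was done in Lemma~\ref{dens:lem:rigidity} and in the definition of \(E_{\mathrm{pr}}\), and the only care needed is in verifying the disjointness of the intervals.
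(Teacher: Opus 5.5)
Your proof is correct and follows essentially the same route as the paper. It uses \(K(n)\le|J(n)|\) because \(n\notin E_{\mathrm{pr}}\), shows any further index \(i\in J(n)\) must satisfy \(n\in[2T_i,T_i^2)\), notes these intervals are pairwise disjoint since \(T_{i+1}=T_i^4\), and applies Lemma~\ref{dens:lem:rigidity} to turn \(|J(n)|=K(n)+b(n)\) into \(r_1(A,n)=2K(n)+2b(n)\); your explicit handling of the case \(n<2T_i\) and of non-consecutive indices just spells out steps the paper leaves implicit (and your opening phrase should read \(\mathbb N_0\setminus E_0\subseteq\mathbb N_0\setminus E_{\mathrm{nc}}\), since \(E_{\mathrm{nc}}\subseteq E_0\)).
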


\begin{proof}
Since \(n\notin E_{\mathrm{pr}}\), we have \(K(n)\le\abs{J(n)}\). Now suppose that \(i\in J(n)\) and \(T_i^2>n\). Since \(L_{i+1}=4L_i\), we have
\(
T_{i+1}=T_i^4,
\)
and therefore \(2T_{i+1}>T_i^2\). Hence the intervals
\([2T_i,T_i^2)\) are pairwise disjoint. Since every \(i\in J(n)\) with \(T_i^2>n\) satisfies
\(n\in[2T_i,T_i^2)\), there is at most one such index \(i\).
Therefore
\(
  \abs{J(n)}=K(n)+b(n)
\)
for some \(b(n)\in\{0,1\}\). By Lemma~\ref{dens:lem:rigidity}, we have
\(
  r_1(A,n)=2\abs{J(n)}
  =2K(n)+2b(n).
\)
Thus,
\(
  \abs{r_1(A,n)-f(n)}
  =\abs{2b(n)-1}
  =1.
\)
\end{proof}

\begin{proof}[Proof of Theorem~\ref{thm:density}]
For each \(i\ge1\) and \(\sigma\in\{0,1\}\), the set \(D_i^\sigma\)
is infinite. Hence \(A=\bigcup_{i\ge1}\bigl(D_i^0\cup D_i^1\bigr)\) is infinite. Let
\(
E_0=E_{\mathrm{nc}}\cup E_{\mathrm{pr}}.
\)
By Corollary~\ref{dens:cor:exceptional}, we have
\(|E_0\cap[1,x]|
=O\bigl(x^{15/16}(\log\log x)^2\bigr).
\)
By Proposition~\ref{dens:prop:count}, we have
\(
|r_1(A,n)-f(n)|=1
\)
for every \(n\notin E_0\). Therefore
\(
E:=\{n\in\mathbb N:|r_1(A,n)-f(n)|\ne1\}
\subseteq E_0,
\)
and hence
\[
|E\cap[1,x]|
=O\bigl(x^{15/16}(\log\log x)^2\bigr).
\]

It remains to verify the properties of \(f\). The function \(K\), and
hence \(f\), is increasing. For every \(r\ge1\), if \(n\ge T_r^2\),
then \(K(n)\ge r\). Thus \(K(n)\to\infty\) and \(f(n)\to\infty\).

Since
\(
T_i^2=4^{32\cdot4^i},
\)
the inequality \(T_i^2\le n\) implies
\(
i\le\log_4\left(\frac{\log_4 n}{32}\right).
\)
It follows that
\(
K(n)=O(\log\log n)\)
and
$f(n)=O(\log\log n)$.
In particular,
\[
f(n)=o\left(\frac{n}{(\log n)^2}\right).
\]

Finally, \(E\) has density zero, and for every \(n\notin E\),
\[
\frac{|r_1(A,n)-f(n)|}{\sqrt{f(n)}}
=\frac1{\sqrt{f(n)}}\longrightarrow0.
\]
Thus
\(
|r_1(A,n)-f(n)|=o\bigl(f(n)^{1/2}\bigr)
\)
on a set of integers of density \(1\).
\end{proof}

\end{document}